\newcolumntype{2}{D{.}{}{2.0}}
\newtheorem{theorem}{Theorem}[section]
\newtheorem{proposition}[theorem]{Proposition}
\newtheorem{lemma}[theorem]{Lemma}
\theoremstyle{definition}
\newtheorem{definition}[theorem]{Definition}
\newtheorem{example}[theorem]{Example}
\theoremstyle{remark}
\numberwithin{equation}{section}
\newcommand{\bal}{\hat{\alpha}}
\newcommand{\bga}{\hat{\gamma}}
\newcommand{\al}{ \alpha}
\newcommand{\ga}{ \gamma}
\newcommand{\g}{\xi}
\newcommand{\fS}{\mathfrak{S}}
\newcommand{\calG}{\mathcal{G}}
\newcommand{\inv}{^{-1}}
\newcommand{\mycase}[1]{\medskip\medskip\noindent$\bullet${\it \, Case #1 }}
\newcommand{\Aut}{\mbox{Aut}\,}
\newcommand{\D}{\mathcal{D}}
\begin{document}

\title{New Dualities From Old: generating geometric, Petrie, and Wilson dualities and trialities of ribbon graphs}

\author[L.~Abrams]{Lowell Abrams}
\address{	University Writing Program and Department of Mathematics,
George Washington University,
2115 G Street NW, Washington, DC  20052, USA}
\email{labrams@gwu.edu}

\author[J.~Ellis-Monaghan]{Joanna A. Ellis-Monaghan}
\address{Department of Mathematics, Saint Michael's College, 1 Winooski Park, Colchester, VT 05439, USA.  }
\email{jellis-monaghan@smcvt.edu}
\thanks{The work of the second  author was supported by the National Science Foundation (NSF) under grants  DMS-1001408 and EFRI-1332411.
}

\subjclass[2010]{Primary 05C10; Secondary  05C25, 57M15}
\keywords{Embedded graph, ribbon graph, duality, Petrie duality, triality, twuality, regular map, Wilson group, twist, partial dual}
\date{\today}

\begin{abstract}

We develop an algebraic framework for ribbon graphs that reveals symmetry properties of (partial) twisted duality.  The original ribbon group action of Ellis-Monaghan and Moffatt restricts self-duality, self-petriality, or self-triality to the canonical identification between the edges of a graph and those of its dual, petrial, or trial, whereas the more natural usual definition allows \emph{any} isomorphism. Here we define a new ribbon group action on ribbon graphs that uses a semidirect product of the original ribbon group with a permutation group to take (partial) twists and duals of ribbon graphs while simultaneously encoding graph isomorphisms.  This brings new algebraic tools to bear on the natural definitions of self-duality etc., as a ribbon graph is a fixed point of this new ribbon group action if and only if it is isomorphic to one of its (partial) twisted duals. Using these new tools, we show that every ribbon graph has in its orbit an orientable embedded bouquet and prove that the (partial) twisted duality properties of these bouquets propagate through their orbits. Thus, all (partial) twisted duality properties of general embedded graphs, especially those of being self-dual, self-petrial, and self-trial, may be analyzed through orientable embedded bouquets, for which checking isomorphism reduces simply to checking dihedral group symmetries. Previous research on self-duality, self-petriality, and self-triality typically focused on highly symmetric regular maps. However, the theory here fully encompasses all cellularly embedded graphs.  In contrast with the few, large, very high-genus, self-trial regular maps found by Wilson, and by Jones and Poultin,  here we apply the new ribbon group action to generate \emph{all} self-trial ribbon graphs on up to seven edges. We also show how the automorphism group of a graph may be used to find self-dual, -petrial or –trial graphs in its orbit, thus exposing the relationship between regularity and the ribbon group action. This strategy yields an infinite family of self-trial graphs on $3m$ edges, for all $m$, that do not arise as covers or parallel connections of regular maps, thus answering a question of Jones and Poulton.

\end{abstract}

\maketitle

\section{Introduction}

Surface duality and Petrie duality of cellularly embedded graphs have a long history, for both plane graphs and those embedded in other surfaces, as finding and characterizing self-dual and self-Petrial graphs are central to the study of graph symmetry.  In \cite{Wil79}, Wilson examined the action on regular maps generated by surface duality and Petrie duality; these are operations of order two that do not commute, and hence yield an action of $S_3$ on regular maps, with the action of the elements of order three called \emph{triality}. Of primary interest of course are graphs that are self-dual, self-Petrial, self-trial, or all of these.  Subsequently,  surface and Petrie dualites were refined to the individual edges of general embedded graphs, first with Chmutov's partial duality in \cite{Chm09}, and then the twisted duals of \cite{EMM12, EMM13a}. We refer to these full and partial dualities and trialities that arise from twisted duality in aggregate as \emph{twuals}, speaking of self-twualities, twualizing a graph, etc., thus coining a word that captures a sense of both `twist' and `dual', as well as  `duality' and `triality', while having the necessary grammatical forms analogous to those of the word `dual'\footnote{An aesthetically pleasing word that encompasses the six kinds of twisted duality and their partial applications with all the grammatical forms of `duality' proves to be elusive. Acronyms such as PTD (Partial Twisted Dual) don't have analogs of  \emph{e.g.} `dualize' or `duality', while other terms such as `mutable' or `flippable' either have been claimed for other settings or have no linguistic resonance with twisted duality.}.

Here we situate the various forms of graph twuality in a new, more finely grained, setting, presenting a new algebraic framework for determining various forms of self-twuality for cellularly embedded graphs, including not only the surface duals, Petrie duals, and trials (all the direct derivatives of Wilson \cite{Wil79}) but also any application of partial duals and twists.  Critically,  this new ribbon group action encompasses the more common \emph{natural} twuality, which allows any isomorphism between a graph and its twual, and not only the \emph{canonical} twuality of prior ribbon group actions, where twuality was restricted to the canonical identification of edges between a graph and its twual.   We then leverage this framework to show that all forms of self-twuality  may be completely captured through the orbits of orientable embedded bouquets (OEBs), \emph{i.e.} single vertex orientable graphs. 

The ribbon group action defined here provides theoretical tools applicable to the many current directions in partial twuality. In one direction there has been interest in the forms of partial duals of general graphs, particularly their genus(es)  as well as when the partial duals are Eulerian or bipartite. For instance, Ellingham and Zha investigate when a partial dual has the property that every face is bounded by a cycle \cite{EZ17}, Huggett and Moffat characterize when partial duals of plane graphs are bipartite \cite{HM13}, and Metsidik and Jin characterize when partial duals of plane graphs are Eulerian \cite{MJ18}. In another direction, research focuses on full self-twuals for regular maps and when they may be self-dual, self-petrial, or self-Wilsonial, \cite{Cond09, JP10, RSW}. In some sense halfway between these areas is Orbani\'c {\it et al}, who use Wilson's operations to generate examples of $k$-orbit maps, a slight loosening of regularity \cite{OPW}. We give examples illustrating how the algebraic framework developed here can further these directions.

In particular, to illustrate the power of the techniques we are introducing here, we show how manipulating OEBs with the ribbon group action leads to a systematic way of generating graphs with any desired form of self-twuality, for all ribbon graphs and not just regular maps. The techniques here also apply to discovering graphs with desired self-twuality properties in the orbits of highly symmetric graphs, which have rich automorphism groups.  Indeed, regular maps are a prime example of such graphs, which illuminates why regular maps have been a natural search space in the past for self-twual maps of various kinds.

Previous studies such as \cite{JP10, RSW, SS95, SS96, Wil79} of self-dual, self-Petrial, and self-trial graphs have largely focused on either plane graphs or regular maps. One benefit of the approach presented here is that it provides a method of generating and studying self-dual, self-Petrial, and in particular self-trial, graphs in arbitrary surfaces with no assumption of any form of regularity at all.  For example, it was originally conjectured by Wilson  that there are no Class III regular maps, that is regular maps that are self-trial without also being self-dual or self-Petrial.  Wilson \cite{Wil79} subsequently found an non-orientable dual pair of type $\{9,9\}_9$ on 126 edges with characteristic $-70$; Jones and Poulton \cite{JP10}, leveraging a computer search of Conder (referenced in \cite{JP10}), proved that these have the greatest possible Euler characteristic, while the lowest possible genus in the orientable case, corresponding to a regular map of type $\{16, 16\}_{16}$, is 193. They also give constructions such as coverings and parallel connections that yield some infinite families of Class III regular maps, but ask for families that do not arise in this way.  Here, we produce such examples by
reducing the search space to OEBs.  This facilitates an exhaustive computer search for Class III examples among general graphs, and we find that there are in fact a large number of small, low genus, self-trial graphs that are not also self-dual and self-Petrial.  We also find an infinite family of Class III graphs that do not arise as covers or parallel connections of regular maps, thus answering the question posed by Jones and Poulton.

At the heart of our constructions is the very simple classical result that if a graph $G$ is self-Petrial, that is if $G^{\times} = G$, then $H=(G^*)^{\times}$ is self-dual.  In particular, new self-dualities arise from old through conjugation in Wilson's group action.  Here we solve for special kinds of conjugations, or `near-conjugations', within the ribbon group action to discover new self-twual graphs in the orbit of a given graph with some form of self-twuality, thus generating new highly structured graphs. To do this, however, we must first devise a manageable search for the initial graphs with known self-twuality properties.  OEBs serve in this role, since they  can be encoded by chord diagrams and hence can be systematically generated.  Furthermore, checking for isomorphism between OEBs involves considering only the dihedral group action on the chord diagrams, a much more manageable process than general graph isomorphism.

In Sections \ref{sec.Ribbon Graphs} and \ref{sec.wilson group} we review the basics of ribbon graphs and (partial) twuality. We then define our main algebraic object in Section \ref{sec.Ribbon Group}; the key idea is that we fully develop the ribbon group construction of \cite{EMM12, EMM13a} by labeling edges and incorporating edge-permutations into the group action.  With this, the algebraic framework then encompasses the most natural understanding of self-twuality-- e.g. that a graph is considered self-dual if there exists \emph{any} isomorphism between itself and its dual.  This is in contrast to the original ribbon group action given by \cite{EMM12, EMM13a} which restricts self-twuality to the canonical bijection of edges between a graph and its twual.  Definitions \ref{gammadef} and \ref{uniformdef} identify within this framework the various notions of self-twuality underpinning our main applications. From our algebraic perspective, the study of self-twuality becomes the study of stabilizers of the group action.

Our main results, Theorems \ref{canonical equivs}, \ref{natural prop}, and \ref{natural equivs},  show that self-twuality, in its various forms, propagates throughout the orbits of the ribbon group action. 
In Section \ref{sec:OEB} we prove that that every orbit contains an OEB, which provides essential leverage for the examples of Section \ref{Search results}. In particular Figure \ref{fig:selfies} lists, up to isomorphism, all self-trial non-self-dual graphs on up to 7 edges, and Proposition \ref{prop:infinite examples} shows how Theorem \ref{automorph duals} can be used to generate an infinite class of self-trial non-self-dual graphs.

After discussing in Section \ref{Code discussion} the implementation of the computation reported in Figure \ref{fig:selfies}, Section \ref{sec:further directions} closes the paper with a sample of the many further research directions arising from this algebraic framework for exploring graph twualities.

\section{Ribbon graphs and arrow presentations}\label{sec.Ribbon Graphs}

We first give a very brief reminder of some of the various ways to represent a graph embedded in a surface.  A more detailed treatment may be found in \cite{EMM13a}. 

We begin with \emph{ribbon graphs} as these will be the central objects of this paper.  

A \emph{ribbon graph} $G = (V(G),E(G))$ is a surface with boundary presented as the union of two sets of discs, a set $V(G)$ of \emph{vertices} and a set $E(G)$ of \emph{edges}, satisfying the following conditions:
\begin{enumerate}
    \item The vertices and edges intersect in disjoint line segments.
    \item Each such line segment lies on the boundary of precisely one vertex and precisely one edge.
    \item Every edge contains exactly two such line segments.
\end{enumerate}

A \emph{cellular embedding} of a graph $G$ on a closed compact surface $\Sigma$ is a drawing of $G$ on $\Sigma$ such that edges intersect only at their endpoints and each component of $\Sigma - G$ is homeomorphic to a disc. Two cellularly embedded graphs in the same surface are considered equivalent if there is a homeomorphism of the surface taking one to the other, and two cellularly embedded graphs are isomorphic if there is a homeomorphism of the surfaces that induces a graph isomorphism when restricted to the graphs embedded in the surfaces.  

A cellularly embedded graph can be obtained from a ribbon graph by gluing discs into the boundary components of the ribbon graph and and then retracting the ribbon graph in the resulting surface to get the embedding of the graph in the surface. See Figure \ref{figure:K4a}. Two ribbon graphs are isomorphic if they are isomorphic as cellularly embedded graphs.  

An arrow presentation is a concise way of representing a ribbon graph, due to Chmutov.  We simply draw the vertices of the ribbon graph as circles, and make small directed arcs where the edges intersect the vertices (we do not draw the edges).  The two arcs for a given edge are both directed clockwise (or equivalently both directed counterclockwise) if the edge has no twist, and directed as one clockwise and one counterclockwise if the edge is twisted. See Figure \ref{figure:K4a}.

\begin{figure} 
{\includegraphics[height=35mm]{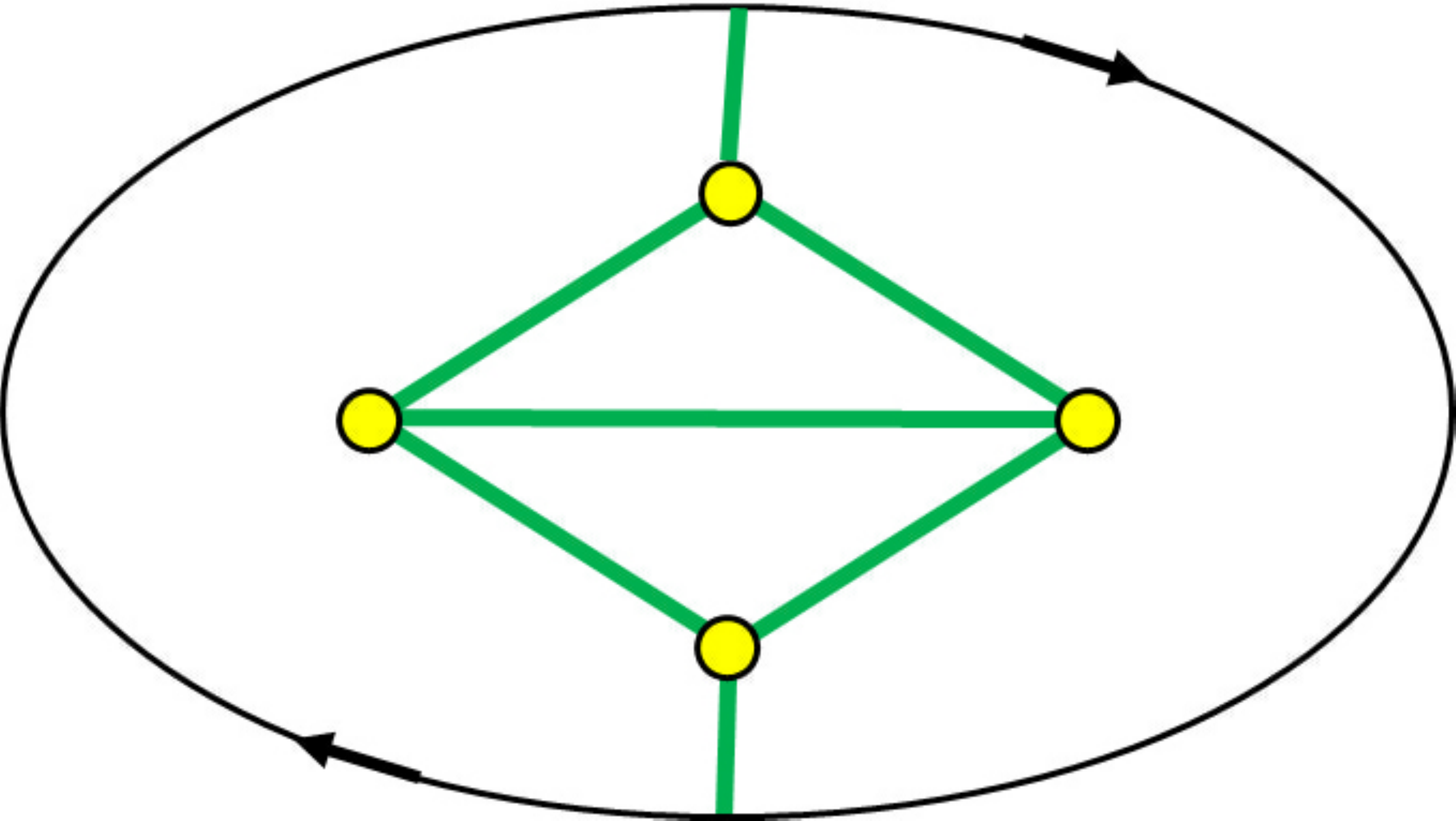}} 
\hspace{1cm} {\includegraphics[height=35mm]{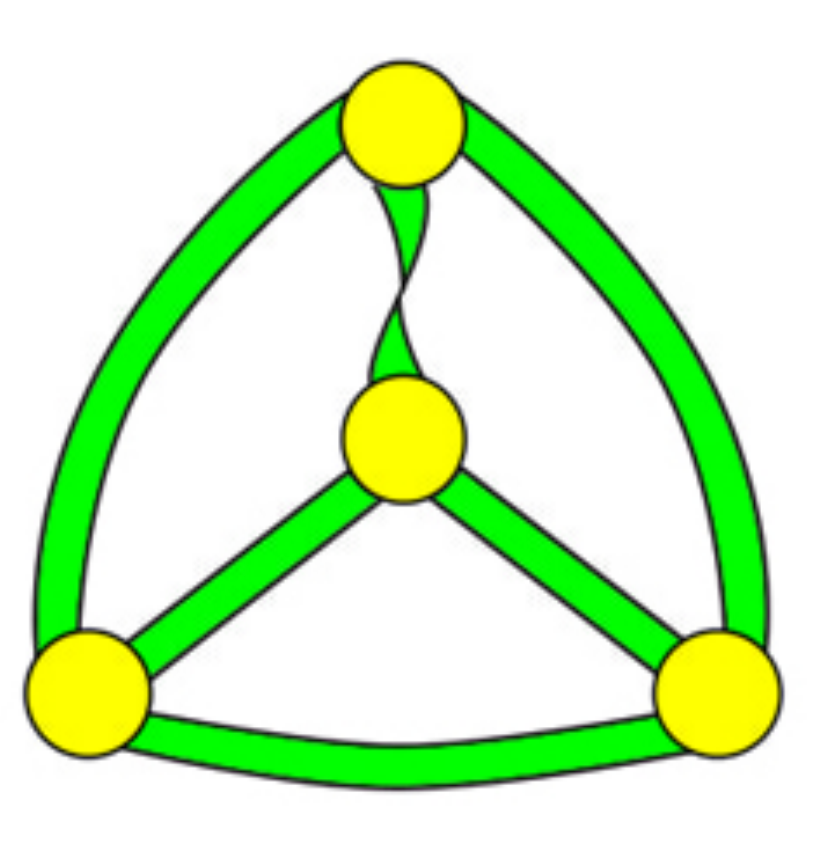}}
\hspace{1cm}
{\includegraphics[height=35mm]{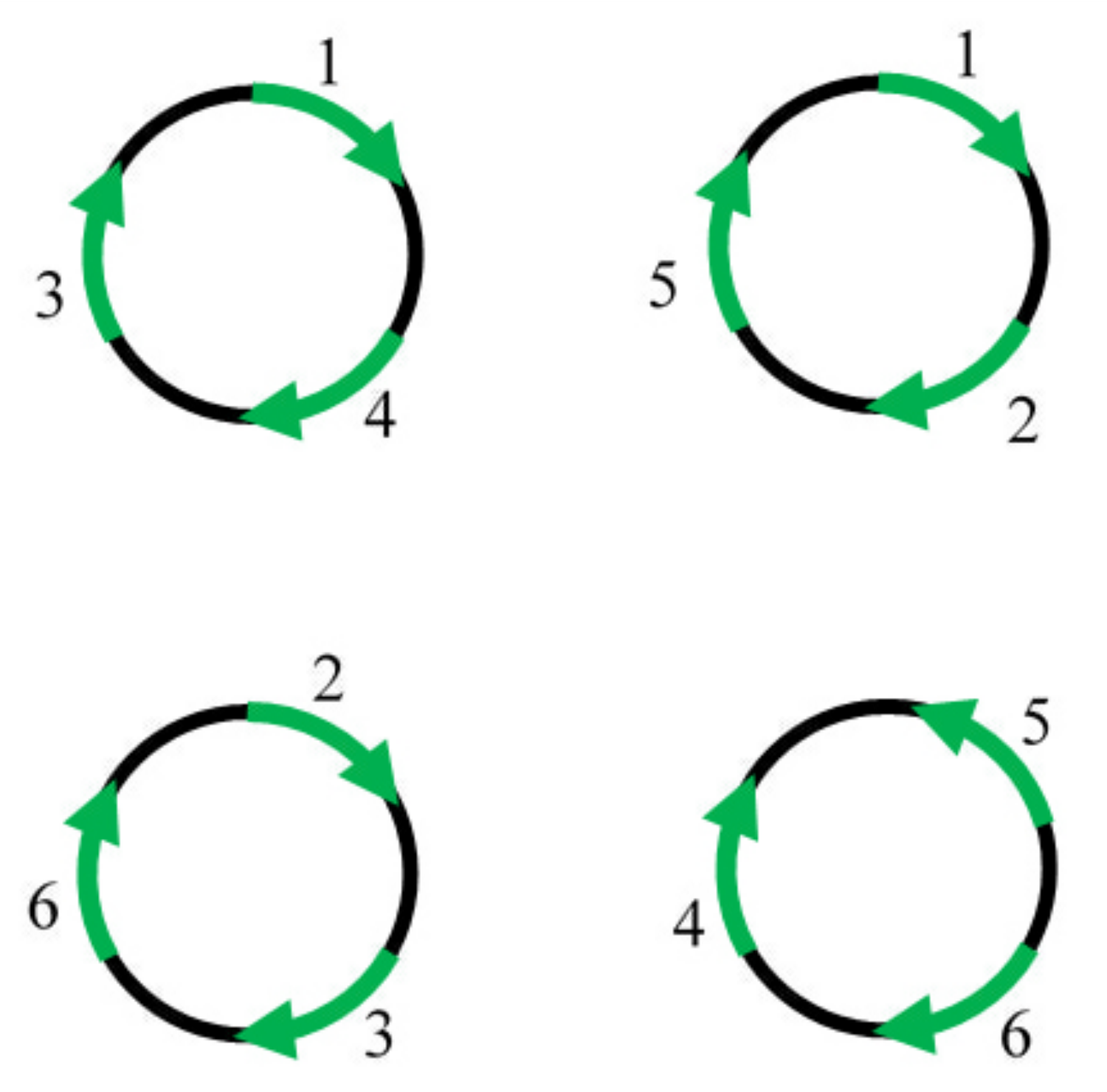}} 
\caption{Three ways of representing $K_4$ on the projective plane. }
\label{figure:K4a}
\end{figure}

Because we allow loops and multiple edges, and will use edge ordering to encode isomorphisms in the following sections, we will need the following formal definition of graph and graph isomorphism.  An abstract graph consists of a set $V$ of vertices and a set $E$ of edges, together with an incidence map $\psi$ taking $E$ to unordered pairs of elements of $V$ (elements may be repeated in the pair in the case of loops).    Two abstract graphs $G = (V_G, E_G)$ and $H=(V_H, E_H)$ are isomorphic if there are bijections $f\colon V_G \rightarrow V_H$ and $g\colon E_G \rightarrow E_H$ such that $\psi_G(e) = (u,v) \iff \psi_H(g(e)) = (f(u), f(v))$.

Since ribbon graphs are the relevant objects in the remainder of the paper, we simply use the word graph to indicate a ribbon graph.  We will use the term \emph{abstract graph} for the usual notion of a graph defined only in terms vertices and edges, and their incidences.

\section{Dualities and the Wilson group action}\label{sec.wilson group}

Two operations on an embedded graph, forming its geometric dual and forming its Petrie dual, are the foundation of the ribbon group action central to this paper. Furthermore, determining graphs that exhibit various forms of self-duality, including for geometric duals and Petrie duals, is the goal of this work.

The geometric dual of a cellularly embedded graph is formed by placing a vertex in the center of each face, and drawing an edge in the surface between two of these vertices whenever there is an edge shared by the faces that contain them. The geometric dual is contained in the same surface as the original graph. The Petrie dual of a graph has the same vertices and edges as the original graph, but the faces are given by `left-right' paths, achieved by choosing an edge, following it to one of its endpoints, and then turning either left or right to following one of its incident faces in the original embedding, and continuing this way, alternating the choice of turning left or right until the path closes. This process of building faces is repeated until every edge has been traversed exactly twice.  The Petrie dual is not generally embedded in the same surface as the original graph.    

Since we will be working primarily in the setting of ribbon graphs, we recall the formation of geometric and Petrie duals in that setting.  To form the geometric dual of a ribbon graph,  we sew discs into the boundary components, and remove the original vertex discs.  The new discs become the vertices of the dual, and the edges remain the same, although the intervals along which they coincide with the new vertices are the complements of those that coincided with the vertices of the original graph. The Petrie dual is formed for a ribbon graph by detaching one end of each edge from its incident vertex disc, giving the edge a half-twist, and reattaching it to the vertex disc.  

Chmutov's partial duality, and the partial Petrie duality given in \cite{EMM12}, apply these notions of duality to one edge at a time.
If $G$ is  an embedded graph given by an arrow presentation and $e$ is an edge of $G$, then the partial dual with respect to $e$ changes the arrow presentation
as follows. Suppose $A$ and $B$ are the two arrows labelled $e$ in an arrow presentation of $G$. Draw a line segment with an arrow on it directed from the the head of $A$ to the tail of $B$, and a line segment with an
arrow on it directed from the head of $B$ to the tail of $A$. Label both of these arrows $e$, then delete $A$ and $B$
and the arcs containing them to complete taking the partial dual with respect to $e$. To take the partial Petrial with respect to $e$, simply reverse the direction of the arrow on exactly one of the arrows labeled $e$. See Figure \ref{partials}.  A full orbit on a single edge is shown in Figure \ref{figure:EdgeOrbit}. Note that applying the partial dual operation to all edges gives the geometric dual, and applying the partial Petrial operation to all edges gives the Petrie dual. 

\begin{figure}
\[  \tau\left( \;\; \raisebox{-4mm}{\includegraphics[height=10mm]{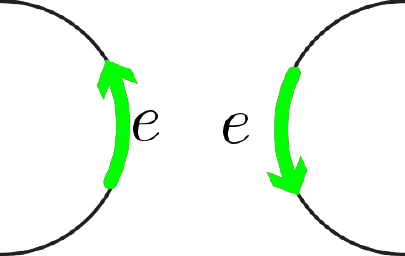}}\;\; \right) \; = \; \;\raisebox{-4mm}{\includegraphics[height=10mm]{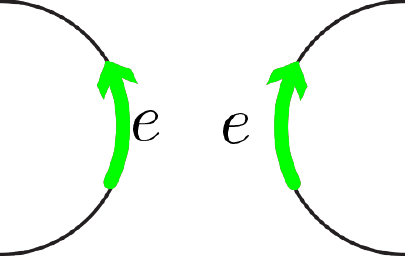}}
\hspace{1cm} \raisebox{0mm}{\text{and}}\hspace{1cm}
\delta\left( \;\; \raisebox{-4mm}{\includegraphics[height=10mm]{a1e}} \;\; \right) \; = \; \; \raisebox{-4mm}{\includegraphics[height=10mm]{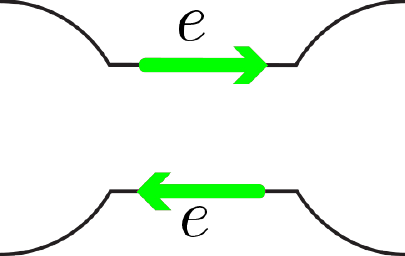}} \]
\caption{The twist operation $\tau$ and partial dual operation $\delta$ given as arrow presentations. }
\label{partials}
\end{figure}

\begin{figure}[ht]
\caption{The full action of $
\delta$  and $\tau$ on a single edge (marked with a * on the left). Note that the four graphs with loop edges each have three vertices, shown in yellow.}
\centering
\includegraphics[width=.8\textwidth]{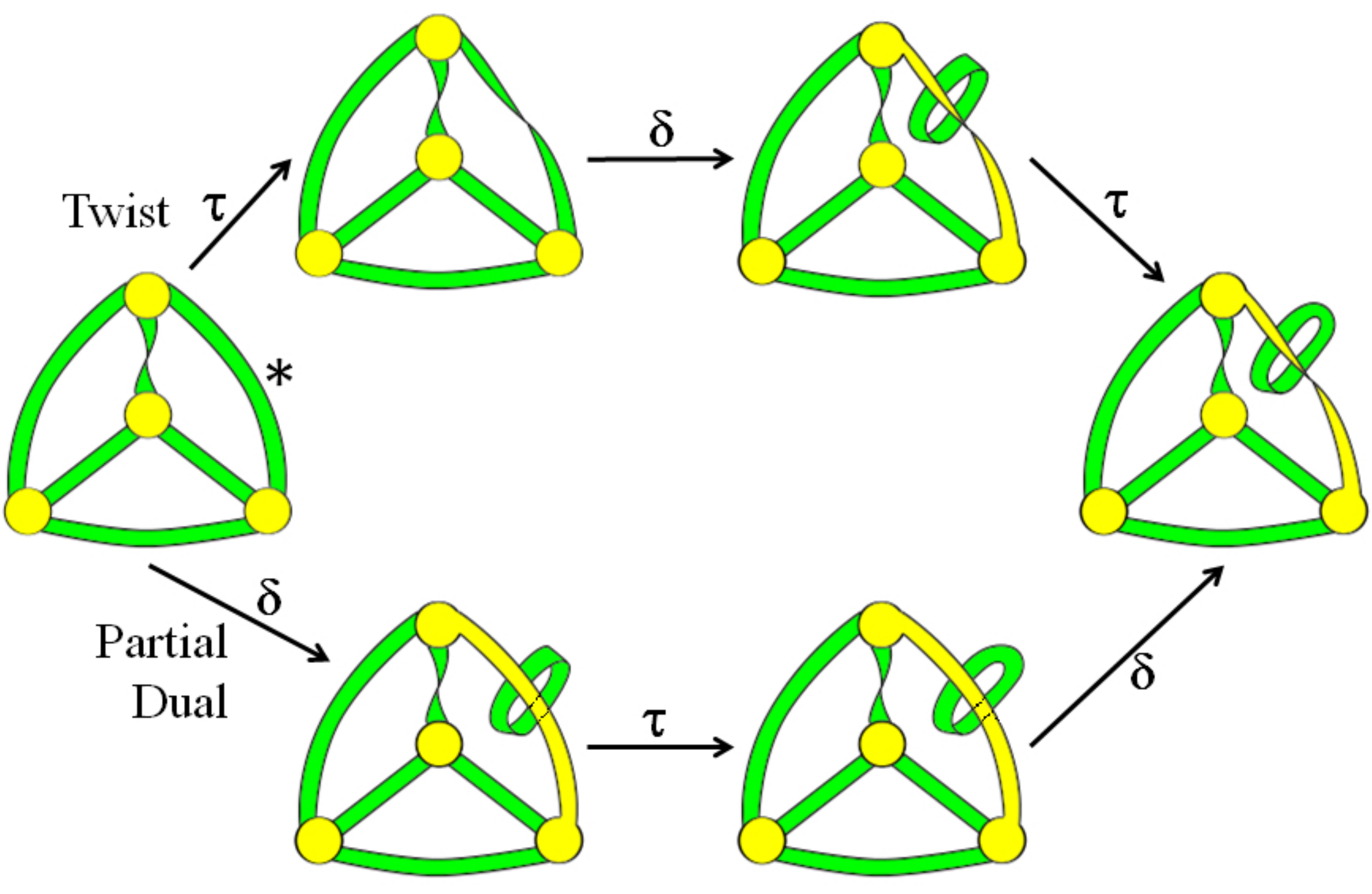}
\label{figure:EdgeOrbit}
\end{figure}

The Wilson group and the Wilson group action of \cite{Wil79} are the forerunners of the ribbon group and restricted ribbon group action from \cite{EMM12, EMM13a} that we extend here.  Wilson noticed that taking the geometric dual and taking the Petrie dual could be thought as operators on embedded graphs (although his attention was mainly on regular maps), and that, although each are of order two, together they generate a group isomorphic to $S_3$, now known as the Wilson group, that acts on embedded graphs.

We use the lexicon from  \cite{EMM12, EMM13a} given in Table \ref{c2.tab1}.

\begin{table}
\begin{tabular}{|l|l|l|l|}
\hline
Generator(s) & Order & Applied to all edges & Applied to a subset of edges  \\
\hline \hline 
$\delta$ & 2&geometric dual   & partial dual \\
\hline 
$\tau$&  2 & Petrie dual  or Petrial   & partial Petrial or twist 
\\
\hline
$\tau\delta\tau$& 2 &Wilson dual or Wilsonial & partial Wilsonial \\ & &(or  \emph{opposite})  & \\ \hline 
$\delta\tau$ or $\tau\delta$ & 3 &triality & partial triality	
\\
\hline 
$\delta$ and $\tau$  & 6   &a direct derivative & twisted dual \\

\hline
\end{tabular}
\bigskip ~
\caption{Operators on embedded graphs or subsets of their edges.} \label{c2.tab1} \end{table}

\section{A new algebraic framework for the ribbon group action}\label{sec.Ribbon Group}

The ribbon group and ribbon group action were defined in \cite{EMM12, EMM13a}.  These constructs fully generalize surface duality and the Wilson group, and thus provide new tools for understanding ribbon graphs. For example, \cite{EMM12} shows that if $G$ and $H$ are graphs, then their medial graphs,  $G_m$ and $H_m$,  are isomorphic as abstract graphs if and only if $G$ and $H$ are twisted duals, while \cite {Chm09, EMM12, EMM13a, EMM13b, EMMpre} give numerous results for topological graph polynomials arising from twisted duality and the ribbon group action, and several authors have explored genus ranges of partial duals in various settings \cite{EZ17, Mof13, Mof16}.  However, the ribbon group action of \cite{EMM12, EMM13a} is limited in that self twuality properties  under it are restricted to only the case of {\it canonical} self-twuality, in which the canonical identification of the edges of the graph and its twual yields an isomorphism.  Here we develop a new algebraic framework for the ribbon group and ribbon group action that facilitates using algebraic tools to expose the more commonally occurring natural twuality properties and to facilitate incorporating the role of graph isomorphism in graph twualities.

Since in the broader setting here, six twisted duality operations apply to individual edges, the edge ordering formalism below keeps track of which operation applies to which edge.  More importantly however, the edge ordering essentially tracks graph isomorphism following a duality operation.  For example, in Figure \ref{figure:digondual}, after a partial dual operation is applied to each edge, so that the net result is the classical dual of the whole graph, the graph and its dual are isomorphic under the map that takes $e$ to $e$ and $f$ to $f$.  However, in Figure \ref{figure:popdual}, the isomorphism between the graph and its dual maps $e$ to $f$ and $f$ to $e$.  We will distinguish between these two kinds of self-duality as canonical and natural, respectively, formalizing this in Definitions \ref{gammadef} and \ref{uniformdef} below.

\begin{figure}[ht]
\caption{A graph that is canonically self-Petrial and also canonically self-dual} 
\centering
\includegraphics[scale=.4]{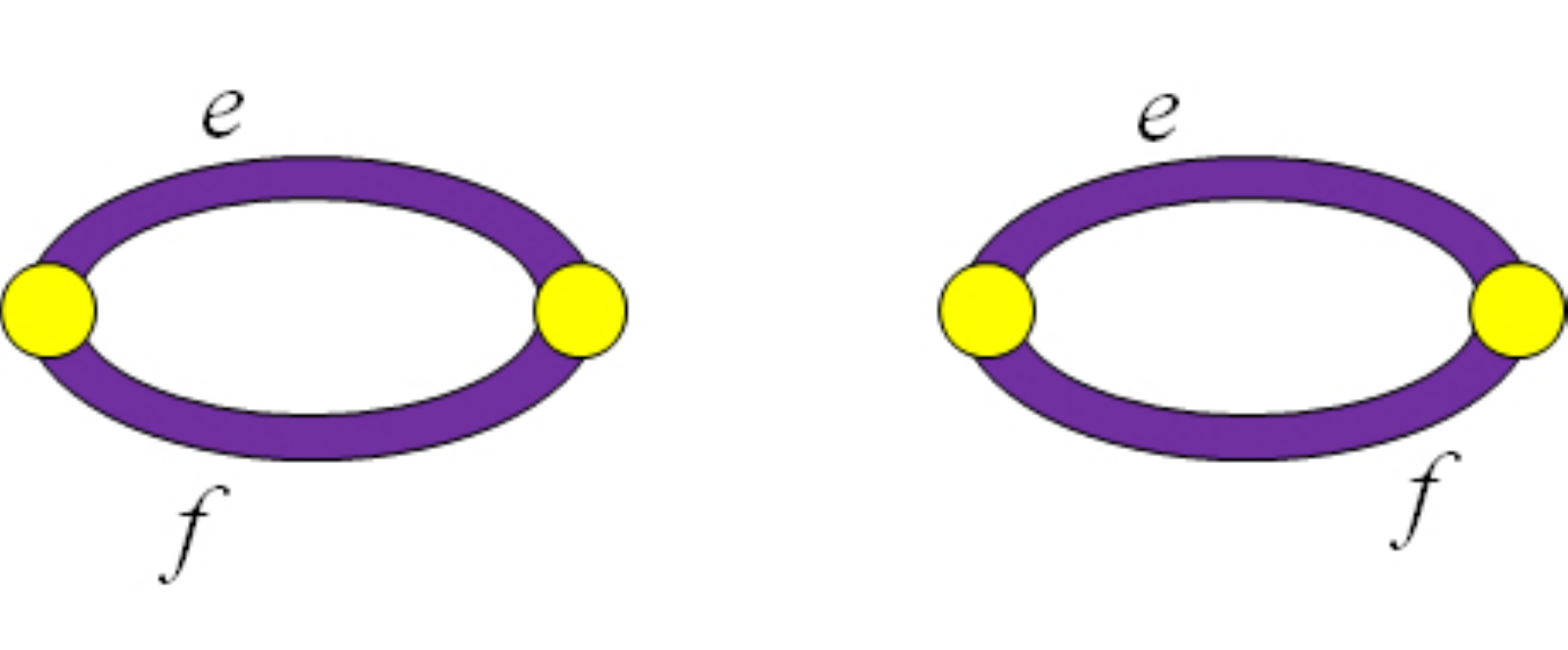}
\label{figure:digondual}
\end{figure}

\begin{figure}[ht]
\caption{A self-dual graph}
\centering
\includegraphics[scale=.4]{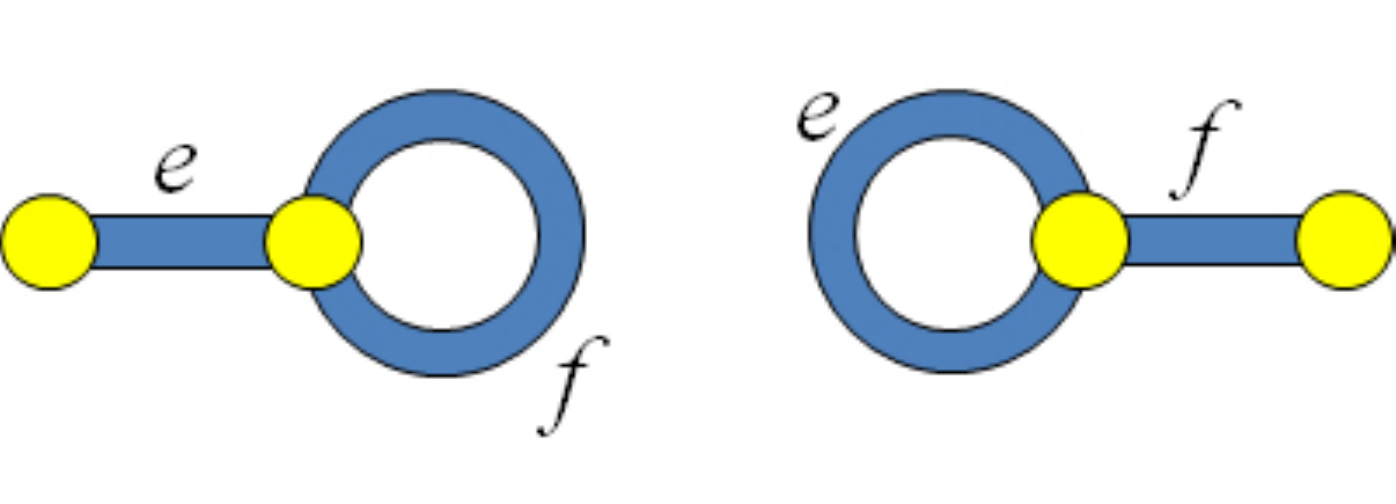}
\label{figure:popdual}
\end{figure}

\begin{figure}[ht]
\caption{A self-Petrial graph}
\centering
\includegraphics[scale=.4]{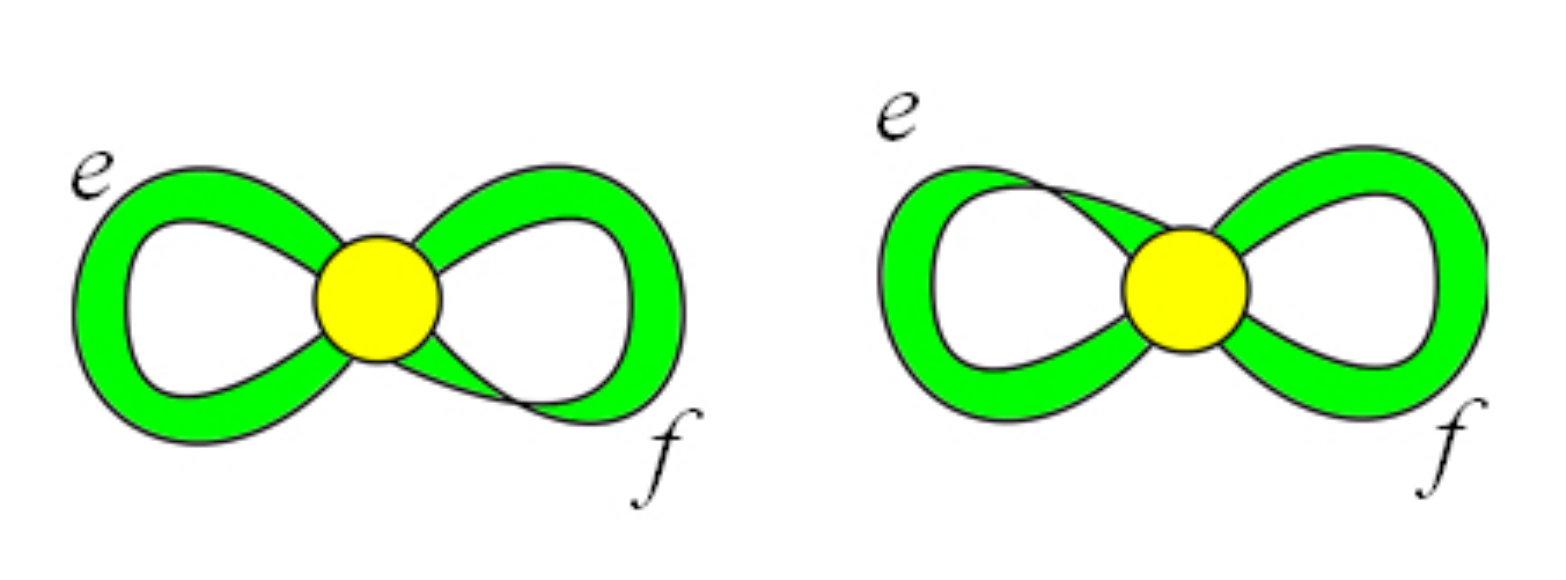}
\label{figure:bowtie}
\end{figure}

The graph in Figure \ref{figure:digondual} is also canonically self-Petrial, since  twists on non-loop edges may propagate through a ribbon graph, resulting in cancellation.  Likewise if one edge of the digon is twisted, the resulting graph is canonically self-Petrial under the map that takes $e$ to $e$ and $f$ to $f$, since ribbon graphs are equivalence classes under vertex flips, so twists on non-loop edges may propagate through a ribbon graph. The graph in Figure \ref{figure:popdual} is self-dual, but not canonically self-dual, since the isomorphism between the graph and its dual is not the canonical identification of edges. The graph in Figure \ref{figure:popdual} is not self-Petrial.     Also, the join of two loops, one twisted and one not, as in Figure \ref{figure:bowtie}, is self-Petrial, but not canonically self-Petrial, as a twist can not move from one loop to another.  In general, no graph with loops can be canonically self-Petrial or canonically self-dual.

Because of this, we will begin by working with graphs with a linear ordering of the edges.
Let $\calG_{(n)}$ denote the set of equivalence classes under isomorphism of ribbon graphs with exactly $n$ edges, and let
\[\calG_{or (n)} := \left\{  (G,\ell) \mid G \in \calG_{(n)} \text{ and } \ell \text{ is a linear ordering of the edges} \right\}\] be the set of equivalence classes of ribbon graphs with  exactly $n$ ordered edges.

In particular, we think of $\ell$ explicitly as a bijection $\ell\colon [n] \mapsto E(G)$, whose input is a position in the ordering, and whose output is the edge in that position.  Two elements $(G, \ell_G)$ and $(H, \ell_H)$ of $\calG_{or (n)}$ are isomorphic if there is an isomorphism of $G$ and $H$ as ribbon graphs so that the bijection $g\colon E_G \rightarrow E_H$ agrees with the linear orderings in that 
$\ell_G(i)= e \iff \ell_H(i) = g(e)$.

The two operations, the twist $\tau$ and partial-dual $\delta$, act on a specified edge $e_i$ of an embedded graph  as shown in Figures \ref{partials} and \ref{figure:EdgeOrbit}.

In \cite{EMM12} it was shown that, for each edge $e$, in $(G,\ell)$, applying $\tau^2$,  $\delta^2$, or $(\tau \delta)^3$  acts as the identity. Thus,  the group \[ \fS := \langle  \delta, \tau   \; |\;   \delta^2, \tau^2, (\tau \delta)^3   \rangle ,\]
which is isomorphic to the symmetric group of order three, acts on any fixed edge of a graph $(G,\ell) \in \calG_{or(n)}$.

This action readily extends to a group action of $\fS^n$ on  $\calG_{or (n)}$ by allowing the various group elements to act on any subset of the edges independently and simultaneously, not just on a single distinguished edge.

If $\bga = ( g_1, g_2,g_3, \ldots , g_n ) \in \fS^n$,  then, for any  $(G,\ell) \in \calG_{or(n)}$, the action of $\bga$ on $(G,\ell)$ applies $g_i$ to the edge $e_i$ in the ordering given by $\ell$.    We will view the indexing as a map, i.e. think of $\bga$ as  $\bga\colon [n] \rightarrow \fS$, so that the action applies the element $\bga (i)=g_i$ to the edge $ \ell (i)$ of $G$.

Frequently twisted duality, in particular partial duals, is given for graphs without ordered edges by specifying which element of $\fS$ is applied to which subset of a 6-partition of the edges.  In particular, Proposition~3.7 of \cite{EMM12} shows that every twisted dual admits a unique expression of the form 
\begin{equation}\label{e.ga}
G^{\prod^6_{i=1}{\g_i(A_i)}},
\end{equation}
where the $A_i$ partition $E(G)$, and where $\g_1=1, \g_2=\tau , \g_3=\delta , \g_4=\tau \delta , \g_5= \delta \tau$, and  $\g_6 = \tau \delta \tau \in \fS$.   This may also be written in the expanded form
\begin{equation}\label{e.gaexp}
G^{\prod_{e \in E(G)}{\g_{s(e)}(e)}},
\end{equation}
where  if $e \in A_i$, then $s(e) =i$.  Often, if only one element  of $\fS$ is applied to some subset $A \subseteq E(G)$, then the notation is simplified to $G^{\g(A)}$ for $\g \in \fS-\{1\}$, e.g.  writing $G^{\delta (A)}$ for the partial dual with respect to $A$.

With the preceding, we can write the action of $\bga$ on $(G,\ell)$ as
\begin{equation}\label{e.closedform}
\bga(G,\ell) = \left(G^{\Gamma(\bga,\ell)}, \ell\right),
\end{equation}
where $\Gamma(\bga,\ell) = \prod^6_{i=1}{\g_i(\ell \bga^{-1}(\g_i))}=\prod_{e \in E(G)}{\left[ \bga \ell^{-1}(e) \right] (e)}$.  Here $\Gamma(\bga,\ell)$ just sorts the edges of $E(G)$ into a 6-partition  according to the operation applied to them by $\bga$,  and then applies the appropriate operation to each partition.  Although the edge order $\ell$ is used to determine $\Gamma(\bga,\ell)$, note that $\ell \bga^{-1}(\g_i)$ is a set, so the result may be applied to $G$, which is unordered, without reference to edge order. 

We make the following observations that will be used to prove associativity in Proposition \ref{semidirect} below.   We first note that permuting the order of the edges translates simply to permuting which element of $\fS$ applies to which edge.  

\begin{equation}\label{shift pi}
\Gamma(\bga,\ell \pi^{-1}) =\prod_{e \in E(G)}{\left[ \bga (\ell \pi^{-1})^{-1}(e) \right] (e)} = \prod_{e \in E(G)}{\left[ (\bga\pi) \ell^{-1}(e) \right] (e)}=\Gamma(\bga \pi,\ell).
\end{equation}
Furthermore, note that composition with a permutation distributes over the group operation in $\fS^n$, so that
\begin{equation}\label{distribute pi}
\bga_1 \pi \circ \bga_2 \pi = (\bga_1 \circ \bga_2) \pi, 
\end{equation}

\begin{equation} \label{iterate Gamma left}
\bga_1 \circ (\bga_2 \pi)  = (\bga_1 \pi^{-1}  \circ \bga_2 ) \pi, 
\end{equation}

and
\begin{equation} \label{iterate Gamma right}
(\bga_1 \pi) \circ \bga_2 = (\bga_1 \circ \bga_2 \pi^{-1}) \pi; 
\end{equation}

also, 
\begin{equation} \label{inverse Gamma pi}
(\bga \pi) ^{-1} = \bga ^{-1}  \pi, \text{ since } \left(\bga\pi \circ \bga^{-1}\pi \right) = \mathbf{1} \pi = \mathbf{1}.
\end{equation}
Lastly, iterated applications of $\Gamma$ are captured by multiplication, in that

\begin{equation}
(G^{\Gamma(\bga_2,\ell)})^{\Gamma(\bga_1,\ell)} = G^{\Gamma(\bga_1\bga_2,\ell)}.
\end{equation}

Caution: Note that with this notation, $(G^{\times})^* = G^{(* \times)}$.

\medskip
There is also a second group action on  $\calG_{or(n)}$.  The symmetric group on $n$ elements, $S_n$, also acts on $\calG_{or(n)}$  by  permuting the edge order, so $\pi (G,\ell) = (G,\ell \pi^{-1})$, where $\ell {\pi}^{-1}$ is just composition of the functions $\ell$ and $\pi$. 
Thus, applying  $\pi$ to  $(G,\ell)$ permutes the ordered list of edges.  We write $ \iota $ for the identity in $S_n$.

We turn to a semidirect product of $\fS^n$ and $S_n$ for an action on $\calG_{or(n)}$ that combines these two actions in a compatible way.  This will be the primary algebraic tool for manipulating ribbon graphs, as its function is to apply specific elements of the ribbon group to specific edges, and furthermore to keep track of isomorphisms between graphs via the edge orderings.

\begin{proposition} \label{semidirect}
Let $\phi\colon S_n \rightarrow  \Aut \fS^n$ by $\phi(\pi) \mapsto \phi_{\pi}$, where $\phi_{\pi}(\bga) = \bga \pi^{-1}$.  Then $\phi$ is a homomorphism, and the semidirect product $\fS^n \rtimes_{\phi}  S_n$ acts on $\calG_{or(n)}$ by $(\bga, \pi)(G,\ell) = \bga(G, \ell \pi^{-1}) =(G^{\Gamma(\bga,\ell\pi^{-1})}, \ell \pi^{-1}) = (G^{\Gamma(\bga \pi,\ell)}, \ell \pi^{-1})$.

\end{proposition}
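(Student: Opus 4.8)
The plan is to establish the two assertions of the proposition in turn: first that $\phi$ is a group homomorphism $S_n\to\Aut\fS^n$, and then that the displayed formula defines a left action of the resulting semidirect product $\fS^n\rtimes_\phi S_n$ on $\calG_{or(n)}$. For the first assertion I would observe that, for a fixed $\pi\in S_n$, the map $\phi_\pi\colon\bga\mapsto\bga\pi^{-1}$ is a bijection of $\fS^n$ with two-sided inverse $\phi_{\pi^{-1}}$ (since $(\bga\pi^{-1})\pi=\bga$), and that it is a homomorphism of $\fS^n$ by \eqref{distribute pi}, applied with $\pi^{-1}$ in place of $\pi$; hence $\phi_\pi\in\Aut\fS^n$. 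That $\phi$ itself is a homomorphism is then the one-line check $\phi_{\pi_1\pi_2}(\bga)=\bga(\pi_1\pi_2)^{-1}=\bga\pi_2^{-1}\pi_1^{-1}=\phi_{\pi_1}(\bga\pi_2^{-1})=(\phi_{\pi_1}\circ\phi_{\pi_2})(\bga)$; the inverse built into the definition of $\phi_\pi$ is exactly what turns the naive anti-homomorphism into a homomorphism.

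Granting this, $\fS^n\rtimes_\phi S_n$ is a bona fide group under $(\bga_1,\pi_1)(\bga_2,\pi_2)=\bigl(\bga_1\circ\phi_{\pi_1}(\bga_2),\,\pi_1\pi_2\bigr)=\bigl(\bga_1\circ(\bga_2\pi_1^{-1}),\,\pi_1\pi_2\bigr)$. I would take the proposed action in the form $\Psi(\bga,\pi)(G,\ell)=\bigl(G^{\Gamma(\bga\pi,\ell)},\,\ell\pi^{-1}\bigr)$; the three expressions for $(\bga,\pi)(G,\ell)$ in the statement agree by \eqref{shift pi} together with the definition of the $\fS^n$-action $\bga(G,\ell)=(G^{\Gamma(\bga,\ell)},\ell)$. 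To check that $\Psi$ is an action it then suffices to verify that $(\mathbf 1,\iota)$ acts as the identity -- immediate, since $\Gamma(\mathbf 1,\ell)$ assigns $1\in\fS$ to every edge and leaves $G$ unchanged -- and that $\Psi$ is multiplicative; invertibility of each $\Psi(g)$ comes for free as $\Psi(g^{-1})$, so that $\Psi$ is genuinely a homomorphism into the symmetric group on $\calG_{or(n)}$.

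The heart of the argument is multiplicativity, $\Psi\bigl((\bga_1,\pi_1)(\bga_2,\pi_2)\bigr)=\Psi(\bga_1,\pi_1)\circ\Psi(\bga_2,\pi_2)$. Evaluating the right-hand side at $(G,\ell)$, the second coordinate comes out to $\ell\pi_2^{-1}\pi_1^{-1}=\ell(\pi_1\pi_2)^{-1}$, which already matches the second coordinate on the left. For the first coordinate, after applying $\Psi(\bga_2,\pi_2)$ one is confronted with the exponent $\Gamma(\bga_1\pi_1,\ell\pi_2^{-1})$ taken relative to the new ordering $\ell\pi_2^{-1}$; I would rewrite it as $\Gamma(\bga_1\pi_1\pi_2,\ell)$ via \eqref{shift pi}, so that both exponents are now taken with respect to the single ordering $\ell$, and then invoke the identity $(G^{\Gamma(\bga_2,\ell)})^{\Gamma(\bga_1,\ell)}=G^{\Gamma(\bga_1\bga_2,\ell)}$ to collapse the two applications into $G^{\Gamma((\bga_1\pi_1\pi_2)(\bga_2\pi_2),\,\ell)}$. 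On the left-hand side, expanding the semidirect-product multiplication gives first coordinate $G^{\Gamma((\bga_1\circ(\bga_2\pi_1^{-1}))(\pi_1\pi_2),\,\ell)}$, and $(\bga_1\circ(\bga_2\pi_1^{-1}))(\pi_1\pi_2)=(\bga_1\pi_1\pi_2)\circ(\bga_2\pi_1^{-1}\pi_1\pi_2)=(\bga_1\pi_1\pi_2)\circ(\bga_2\pi_2)$ by \eqref{distribute pi}; so the two first coordinates coincide.

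I expect the main obstacle to be purely one of bookkeeping: three conventions must be kept consistent at once -- the left action ``$\pi$ first, then $\bga$''; the exponent notation with its caution $(G^{\times})^*=G^{*\times}$, under which $\Gamma$ composes in the order opposite to the one in which it is applied; and the twist $\phi_{\pi_1}(\bga_2)=\bga_2\pi_1^{-1}$ that lives inside the semidirect-product multiplication. Once these are pinned down, everything reduces mechanically to \eqref{shift pi}, \eqref{distribute pi}, and the iterated-$\Gamma$ identity. One small point I would make explicit for rigor is that $\Gamma(\bga,\ell)$, regarded as an operation on the \emph{unordered} graph $G$, depends only on the composite map $\bga\ell^{-1}\colon E(G)\to\fS$ and does not disturb the edge set; this is precisely what licenses applying the iterated-$\Gamma$ identity to an intermediate partial dual (whose edges are unchanged) rather than only to $G$ itself.
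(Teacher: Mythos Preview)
Your proposal is correct and follows essentially the same route as the paper's first proof: verify $\phi_{\pi_1\pi_2}=\phi_{\pi_1}\phi_{\pi_2}$ in one line, then establish multiplicativity of the action by applying \eqref{shift pi} to move $\pi_2$ inside $\Gamma$, the iterated-$\Gamma$ identity to collapse the two exponents, and \eqref{distribute pi} to match the semidirect-product side. Your added remarks---that $\phi_\pi$ is itself an automorphism, that the identity acts trivially, and that $\Gamma(\bga,\ell)$ depends only on $\bga\ell^{-1}$ and preserves the edge set---are sound points of rigor the paper leaves implicit; the paper also offers a second proof via the decomposition $(\bga,\pi)=(\bga,\iota)(\mathbf 1,\pi)$, but your argument aligns with its Proof~1.
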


\begin{proof}

Showing that $\phi$ is a homomorphism is routine since $\phi_{\pi_1\pi_2}(\bga) = \bga (\pi_1 \pi_2)^{-1} = \phi_{\pi_1} \phi _{\pi_2} (\bga)$. Thus we have a well-defined  semidirect product $\fS^n \rtimes_{\phi}  S_n$ with multiplication given by $(\bga_1, \pi_1) (\bga_2, \pi_2) = (\bga_1 \phi_{\pi_1}(\bga_2), \pi_1 \pi_2)$.

It remains to verify associativity, that $(\bga_1,\pi_1)\cdot\left[(\bga_2,\pi_2)\cdot(G,\ell)\right] = \left[ (\bga_1,\pi_1)(\bga_2,\pi_2)\right] \cdot (G,\ell)$.

We give two proofs that the action is associative, as they illustrate how the algebraic and topological machinery may be used.  However, at the heart of both proofs is the observation that first applying $(\bga_2,\pi_2)$ and then applying $(\bga_1,\pi_1)$, requires `undoing' the edge ordering given by $\pi_1$ so that the operations in $\bga_2$ apply to the correct edges.  This  exactly corresponds to the $\pi_1^{-1}$ that appears when first multiplying $(\bga_1,\pi_1)(\bga_2,\pi_2)$ and only then applying the result to $(G,\ell)$, rather than applying the multiplicands iteratively to $(G,\ell)$.

\emph{Proof 1.}
The first proof leverages the identities in Equations (\ref{shift pi}) through (\ref{iterate Gamma right}).  We compute as follows:

\[
\begin{array}{rcl}
(\bga_1,\pi_1)\cdot\left[(\bga_2,\pi_2)\cdot(G,\ell)\right] 

&=&(\bga_1,\pi_1)\cdot\left(G^{\Gamma(\bga_2 \pi_2,\ell)}, \ell \pi_2^{-1}\right) \\[0.25ex]

&=&\left((G^{\Gamma(\bga_2 \pi_2,\ell)})^{\Gamma(\bga_1 \pi_1 ,\ell\pi_2^{-1})}, \ell \pi_2^{-1}\pi_1^{-1}\right) \\[0.25ex]

&=&\left((G^{\Gamma(\bga_2 \pi_2,\ell)})^{\Gamma(\bga_1 \pi_1 \pi_2 ,\ell)}, \ell \pi_2^{-1}\pi_1^{-1}\right) \\[0.25ex]

&=& \left(G^{\Gamma(\bga_1\pi_1\pi_2\circ\bga_2 \pi_2,\ell)}, \ell \pi_2^{-1}\pi_1^{-1}\right) \\[0.25ex]

&=& \left(G^{\Gamma((\bga_1\circ\bga_2 \pi_1^{-1})\pi_1\pi_2,\ell)}, \ell \pi_2^{-1}\pi_1^{-1}\right) \\[0.25ex]

& = & \left(\bga_1 \circ \bga_2\pi_1^{-1}, \pi_1\pi_2\right)\cdot(G,\ell)\\[0.25ex]

& = & \left[ (\bga_1,\pi_1)(\bga_2,\pi_2)\right] \cdot (G,\ell).

\end{array}
\]
Here the third equality follows from Equation \ref{shift pi}, the fourth from Equation \ref{iterate Gamma right}, and the fifth from Equation \ref{distribute pi}.

\emph{Proof 2.}
The second proof uses the following observations, which are reductions of operations within the semigroup action:
\begin{equation} \label {decomp pi}
\begin{array}{rcl}
 (\bga, \iota) \cdot\left[ (\mathbf{1},\pi)\cdot(G,\ell)\right]
 
 & = & \left( [ G^{\Gamma(\mathbf{1},\ell\pi^{-1})} ]^{\Gamma(\bga,\ell\pi^{-1})},\ell\pi^{-1} \right)\\[0.25ex]
 
 & = & \left(G^{\Gamma(\bga,\ell\pi^{-1})}, \ell\pi^{-1} \right) \\[0.25ex]
 
 & = & (\bga,\pi)\cdot(G,\ell),
  \end{array}
 \end{equation}

\smallskip
 
 \begin{equation}
 \begin{array}{rcl}

(\mathbf{1},\pi)\cdot\left[ (\bga\pi, \iota) \cdot (G,\ell)\right]

& = & \left( [ G^{\Gamma(\bga\pi,\ell)} ]^{\Gamma(\mathbf{1},\ell\pi^{-1})},\ell\pi^{-1} \right)\\[0.25ex]

& = & (\bga,\pi)\cdot(G,\ell), 
\end{array}
\end{equation}

\smallskip

\begin{equation}
(\mathbf{1},\pi_1\pi_2)\cdot(G,\ell) = \left(G^{\Gamma(\mathbf{1},\ell\pi_2^{-1}\pi_1^{-1})}, \ell\pi_2^{-1}\pi_1^{-1} \right) = (\mathbf{1},\pi_1)\cdot\left[(\mathbf{1},\pi_2)\cdot(G,\ell)\right], \text{and}
\end{equation}

\smallskip

\begin{equation}
(\bga_1\bga_2,\iota)\cdot(G,\ell) = (\bga_1,\iota)\cdot\left[(\bga_2,\iota)\cdot(G,\ell)\right].
\end{equation}
In fact, each of these is a particular instance when the associativity of the action holds, although we don't need that observation per se. 

Now we have
\[
\begin{array}{rcl}
(\bga_1,\pi_1)\cdot\left[(\bga_2,\pi_2)\cdot(G,\ell)\right]

& = & (\bga_1,\pi_1)\cdot\left[  (\mathbf{1},\pi_2)\cdot\left[ (\bga_2\pi_2, \iota) \cdot (G,\ell)\right] \right]\\

& = & (\mathbf{1},\pi_1)\cdot\left[ (\bga_1\pi_1, \iota) \cdot\left[  (\mathbf{1},\pi_2)\cdot\left[ (\bga_2\pi_2, \iota) \cdot (G,\ell)\right]\right] \right]\\

& =& (\mathbf{1},\pi_1)\cdot\left[ (\mathbf{1},\pi_2) \cdot\left[ (\bga_1\pi_1\pi_2, \iota) \cdot\left[ (\bga_2\pi_2, \iota) \cdot (G,\ell)\right]\right] \right]\\

& = & (\mathbf{1},\pi_1\pi_2) \cdot \left[ (\bga_1\pi_1\pi_2 \circ \bga_2\pi_2, \iota) \cdot (G,\ell)\right]\\

& = & (\bga_1 \circ \bga_2\pi_1^{-1}, \pi_1\pi_2)\cdot(G,\ell)\\

& = & \left[ (\bga_1,\pi_1)(\bga_2,\pi_2)\right] \cdot (G,\ell)
\end{array}
\]
\end{proof}

\medskip

Recall that applying a permutation to an indexed set applies the inverse permutation to the indices, so that $\phi_{\pi}$ acts by permuting the indices of $\bga$, where if $\bga = ( g_1, g_2,g_3, \ldots , g_n )$, then $\phi_{\pi}(\bga)= ( g_{\pi^{-1}(1)}, g_{\pi^{-1}(2)},g_{\pi^{-1}(3)}, \ldots , g_{\pi^{-1}(n)} )$.

Also note that $(\bga, \pi)^{-1}=(\bga^{-1} \pi, \pi^{-1})$, applying Equation \ref{distribute pi} as needed.

\medskip
We close this section by establishing some notation that will facilitate our exploration of the orbits and stabilizers of the ribbon group action.

As usual $Orb((G,\ell_G)) = \{(H, \ell_H) \mid (\bga, \pi)(G, \ell_G) =(H, \ell_H) \text{ for some } (\bga, \pi) \in \fS^n \rtimes_{\phi}  S_n \}$, although we will usually write $Orb(G,\ell_G)$ for $Orb((G,\ell_G))$.  In addition, we will often focus particularly on the action of the subgroup $\fS^n \rtimes_{\phi}  \iota$, and will denote its orbit as $Orb_{\iota}(G,\ell_G) = \{(H, \ell_H) \mid (\bga, \iota)(G, \ell_G) =(H, \ell_H) \text{ for some } \bga \in \fS^n \}$.

We will see that the various twualities an embedded graph may exhibit may be revealed by examining stabilizers of different kinds.

\begin{definition}\label{gammadef} We say that an embedded graph $(G,\ell)$ is \emph{self-$\bga$} if $\bga$ is not the identity and there exists $\pi \in S_n$ such that  $(\bga, \pi)(G, \ell) = (G,\ell)$, and we say that $(G,\ell)$ is \emph{canonically self-$\bga$} if $(\bga, \iota)(G, \ell) = (G,\ell)$.
\end{definition}

In particular, $(G,\ell)$ is self-$\bga$ for some $\bga$ different from the identity if it has a non-trivial stabilizer in $\fS^n \rtimes  S_n$, and canonically self-$\bga$ if it has a non-trivial stabilizer in  $\fS^n \rtimes  \iota$.

We will be most interested in the case that $G$ is self-dual, self-Petrial, self-trial, etc. in the traditional sense. This corresponds to $\bga$ being \emph{uniform}, that is, every entry of $\bga$ is the same.

\begin{definition}\label{uniformdef} We say that an embedded graph $(G,\ell)$ is \emph{self-twual} via $\pi$ (or just self-twual) if there is a uniform $\bga \in \fS^n$ and a $\pi \in S_n$ such that  $(\bga, \pi)(G, \ell) = (G,\ell)$.   We say that $(G,\ell)$ is \emph{canonically self-twual} if $(G,\ell)$ is self-twual via $\iota$. 
\end{definition}
Thus, for example, a graph $G$ is canonically self-dual if  $(\boldsymbol{\delta}, \iota)(G, \ell) = (G,\ell)$.

Note that because $\calG_{or(n)}$ is a set of equivalence classes,   $(\bga, \pi)(G, \ell) = (G,\ell)$ means that there is an isomorphism between $G^{\Gamma (\bga,\ell\pi^{-1})}$ and $G$, and  
the correspondence of edges under this isomorphism is given by the mapping $\ell\pi^{-1}(i) \mapsto \ell(i)$.

\remark{A comparison with the ribbon group and ribbon group action on graphs with ordered edges of  \cite[Definition 2.21]{EMM13a} shows that this action is exactly the action of $\fS^n \rtimes  \iota$.  The permutation group in $\fS^n \rtimes  S_n$ allows us to define natural self-twuality precisely as $(\bga, \pi)(G, \ell) = (G,\ell)$, with $\pi$ specifying the isomorphism.  However, the limitation of the cannonical action of \cite{EMM13a}  is evident in \cite[Theorem 2.25]{EMM13a} where the most that can be said is that two graphs have a given self-twuality if they both belong to a particular set, with the isomorphism unspecified.}

\section{Propagation of twuality} \label{propagation}

We show here that if a graph $H$ is self-$\bga$ for any non-trivial $\bga$, then any graph $G$ in its orbit is also self-$\bga'$ for some $\bga'$.  Thus, once any graph is found to have some twisted duality property, that property propagates through its whole orbit.  Furthermore, as we will see later, it possible to search the orbit efficiently for graphs with desired self-twuality properties. 

Recall that in the classical setting, we get new self-twualities by conjugation, so that if $G$ has some self-twuality property and $H$ is an element of the orbit of $G$ under the Wilson group action, then $H$ will have a self-twuality that is conjugate to $G$'s.  For example, if $G$ is self-dual, with $G=G^{\delta(E)}$ and $H=G^{\tau(E)}$, then $H$ is self-$\tau \delta \tau^{-1}$.

This same principle holds for any pair of elements in the Wilson group.   Here we adapt this principle to the more refined setting of the ribbon group action, which then allows us to  study twuality systematically in the subsequent sections.

We begin with a short technical lemma to verify the intuitively clear fact that if $H$ is self-$\bga$, then this property is preserved if the edge order and the entries of $\bga$ are permuted simultaneously by the same permutation.   

\begin{lemma} \label{selfgamma}  If $(H, \ell)$ is self-$\bga$ via $\mu$, then for any permutation $\pi$,  $(H, \ell \pi^{-1})$  is self-$\bga \pi^{ -1}$ via $\pi \mu \pi^{-1}$.

\end{lemma}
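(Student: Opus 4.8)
The plan is to unwind both instances of the definition of self-$\bga$ (Definition~\ref{gammadef}) and reduce the claim to the semidirect-product multiplication rule together with the transport identities already recorded in Proposition~\ref{semidirect}. Concretely, the hypothesis ``$(H,\ell)$ is self-$\bga$ via $\mu$'' means $(\bga,\mu)\cdot(H,\ell) = (H,\ell)$, and the conclusion to be proved is $(\bga\pi^{-1},\,\pi\mu\pi^{-1})\cdot(H,\ell\pi^{-1}) = (H,\ell\pi^{-1})$. The cleanest route is a conjugation computation inside $\fS^n \rtimes_\phi S_n$: observe that $(\mathbf{1},\pi)\cdot(H,\ell) = (H,\ell\pi^{-1})$, so it suffices to show that $(\bga\pi^{-1},\pi\mu\pi^{-1})$ equals the conjugate $(\mathbf{1},\pi)\,(\bga,\mu)\,(\mathbf{1},\pi)^{-1}$ in the group. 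Then the conclusion follows formally, since if $(\bga,\mu)$ fixes $(H,\ell)$ and $g = (\mathbf{1},\pi)$, then $g(\bga,\mu)g^{-1}$ fixes $g\cdot(H,\ell) = (H,\ell\pi^{-1})$.

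First I would carry out the group-multiplication bookkeeping. Using the product rule $(\bga_1,\pi_1)(\bga_2,\pi_2) = (\bga_1\phi_{\pi_1}(\bga_2),\pi_1\pi_2)$ with $\phi_{\pi_1}(\bga_2) = \bga_2\pi_1^{-1}$, together with the stated inverse formula $(\bga,\pi)^{-1} = (\bga^{-1}\pi,\pi^{-1})$, I compute
\[
(\mathbf{1},\pi)\,(\bga,\mu)\,(\mathbf{1},\pi^{-1})
= (\bga\pi^{-1},\,\pi\mu)\,(\mathbf{1},\pi^{-1})
= (\bga\pi^{-1},\,\pi\mu\pi^{-1}),
\]
where in the first step $\phi_\pi(\bga) = \bga\pi^{-1}$ and in the second step $\phi_{\pi\mu}(\mathbf{1}) = \mathbf{1}$ because $\mathbf{1}$ composed with any permutation is still $\mathbf{1}$ (this is exactly Equation~\eqref{inverse Gamma pi}'s underlying fact). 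This identifies the conjugate as claimed.

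Then I would close the argument: since $\bga$ is not the identity, $\bga\pi^{-1}$ is not the identity either (permuting the entries of a non-identity tuple leaves it non-identity), so $\bga\pi^{-1}$ is a legitimate witness for the self-$\bga\pi^{-1}$ property. Applying the fixed conjugate to the fixed point $(H,\ell\pi^{-1}) = (\mathbf{1},\pi)\cdot(H,\ell)$ gives
\[
(\bga\pi^{-1},\pi\mu\pi^{-1})\cdot(H,\ell\pi^{-1})
= (\mathbf{1},\pi)(\bga,\mu)(\mathbf{1},\pi)^{-1}\cdot\big((\mathbf{1},\pi)\cdot(H,\ell)\big)
= (\mathbf{1},\pi)(\bga,\mu)\cdot(H,\ell)
= (\mathbf{1},\pi)\cdot(H,\ell) = (H,\ell\pi^{-1}),
\]
which is precisely the statement that $(H,\ell\pi^{-1})$ is self-$\bga\pi^{-1}$ via $\pi\mu\pi^{-1}$.

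**Expected main obstacle.** There is no deep obstacle here; the lemma is genuinely ``technical'' as the authors say. The only place to be careful is the direction of composition: the action is a left action but the entries of $\bga$ are transported by $\phi_\pi(\bga) = \bga\pi^{-1}$ rather than $\pi\bga$, so one must track whether a given expression denotes the tuple-permutation $\bga\pi$ or the ordinary product in $\fS^n$, and make sure the $\pi$ versus $\pi^{-1}$ conventions match those in Equations~\eqref{shift pi}--\eqref{inverse Gamma pi}. A purely computational alternative that sidesteps the conjugation framing is to expand both sides via the closed form $(\bga,\pi)(G,\ell) = (G^{\Gamma(\bga\pi,\ell)},\ell\pi^{-1})$ and match the $\Gamma$-exponents using \eqref{shift pi} and \eqref{distribute pi}; I would keep that as a fallback in case the conjugation phrasing obscures some convention mismatch.
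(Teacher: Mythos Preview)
Your proof is correct and is essentially the same argument as the paper's: both verify the identity $(\bga\pi^{-1},\pi\mu\pi^{-1})(\mathbf{1},\pi) = (\mathbf{1},\pi)(\bga,\mu)$ in $\fS^n \rtimes_\phi S_n$ by a direct semidirect-product computation and then invoke associativity of the action, the only difference being that the paper phrases this as commutativity of a square diagram while you phrase it as an explicit conjugation. Your additional remark that $\bga\pi^{-1}$ is non-identity when $\bga$ is non-identity is a small nicety the paper leaves implicit.
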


\begin{proof}
It suffices to verify the commutativity of the following diagram:
\[
\xymatrixcolsep{6pc}
\xymatrix{ 
(H,\ell \pi^{-1}) \ar[r]^-{\left(\bga \pi^{-1},\pi \mu \pi^{-1} \right)}  & (H,\ell \pi^{-1})\\
(H,\ell_H) \ar[u]^{(\mathbf{1},\pi)}   
\ar[r]^-{(\bga, \mu)} 
& (H,\ell_H) \ar[u]_{(\mathbf{1},\pi) }
}
\]
By associativity of the action, this follows from the following calculation in $\fS^n \rtimes_{\phi}  S_n$.
  \[
  \begin{array}{rcl}
  (\bga \pi^{-1}, \pi \mu \pi^{-1} ) (\mathbf{1}, \pi)
  
  &=&  (\bga \pi^{-1} \cdot \mathbf{1}\pi\mu^{-1}\pi^{-1}, \pi \mu  )\\
  
  &=&  (\bga\pi^{-1} \cdot \mathbf{1}, \pi\mu) \\
 
  &=& (\mathbf{1}\cdot\bga\pi^{-1},\pi\mu)\\
 
   & = & (\mathbf{1},\pi)(\bga,\mu).
   \end{array}
  \]
\end{proof}

The following proposition gives the main result in the simplest setting, where there are no permutations of the edges to keep track of, and hence the principle of the proof is easier to see.

\begin{proposition}\label{canonical prop}
Suppose $(H, \ell_H)$ is  canonically self-$\bga$, and $(G, \ell_G) \in Orb_{\iota}(H, \ell_H)$ with $(\bal, \iota)(H,\ell_H)=(G, \ell_G)$.  Then  $G$ is also canonically self-$\bga'$ where $\bga'$ is the result of conjugation, specifically $\bga'=\bal \bga \bal^{-1}$.
\end{proposition}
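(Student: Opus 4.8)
The plan is to translate the classical conjugation principle into the algebra of $\fS^n \rtimes_\phi S_n$, working entirely in the subgroup $\fS^n \rtimes_\phi \iota$ where permutations of the edges do not appear. Since $(H,\ell_H)$ is canonically self-$\bga$, we have $(\bga, \iota)(H,\ell_H) = (H,\ell_H)$, and since $(G,\ell_G) \in Orb_\iota(H,\ell_H)$ there is $\bal \in \fS^n$ with $(\bal,\iota)(H,\ell_H) = (G,\ell_G)$. The goal is to produce $\bga' = \bal\bga\bal^{-1}$ with $(\bga',\iota)(G,\ell_G) = (G,\ell_G)$.

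First I would observe that within $\fS^n$ (equivalently, restricting attention to the subgroup $\fS^n \rtimes_\phi \iota$, whose multiplication is just $(\bga_1,\iota)(\bga_2,\iota) = (\bga_1\bga_2,\iota)$), each element is invertible; in particular $(\bal,\iota)$ has inverse $(\bal^{-1},\iota)$. The natural computation is
\[
(\bga',\iota)(G,\ell_G) = (\bal\bga\bal^{-1},\iota)\bigl[(\bal,\iota)(H,\ell_H)\bigr].
\]
Using the associativity of the action (Proposition \ref{semidirect}) and the fact that multiplication in $\fS^n \rtimes_\phi \iota$ is componentwise in the first coordinate, the right-hand side equals $\bigl[(\bal\bga\bal^{-1},\iota)(\bal,\iota)\bigr](H,\ell_H) = (\bal\bga,\iota)(H,\ell_H)$. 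Then again by associativity this is $(\bal,\iota)\bigl[(\bga,\iota)(H,\ell_H)\bigr] = (\bal,\iota)(H,\ell_H) = (G,\ell_G)$, using the canonical self-$\bga$ hypothesis on $H$ in the penultimate step. It remains only to note that $\bga'$ is not the identity: if $\bal\bga\bal^{-1} = \mathbf 1$ then $\bga = \mathbf 1$, contradicting that $H$ is self-$\bga$; so $\bga'$ genuinely witnesses canonical self-twuality for $G$.

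I expect there to be essentially no serious obstacle here — the statement is, as the authors say, the straightforward adaptation of "new self-dualities from old by conjugation" to the new framework, and the only thing being used beyond group axioms is associativity of the action, which has already been established. The one point worth being careful about is the direction of the conjugation and the order conventions: the paper's cautionary remark that $(G^\times)^* = G^{(*\times)}$ signals that operator exponents compose in a particular order, so I would double-check that the identity $(G^{\Gamma(\bga_2,\ell)})^{\Gamma(\bga_1,\ell)} = G^{\Gamma(\bga_1\bga_2,\ell)}$ from the excerpt is applied with $\bga_1 = \bal\bga\bal^{-1}$ and $\bga_2 = \bal$ in the right slots, so that the composite exponent is $\bal\bga\bal^{-1}\bal = \bal\bga$ rather than $\bga$. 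A clean way to sidestep any bookkeeping worry is to phrase the whole argument as a commutative-square diagram chase analogous to the one in Lemma \ref{selfgamma}: the left and right vertical arrows are $(\bal,\iota)$, the bottom arrow is $(\bga,\iota)$, the top arrow is $(\bga',\iota)$, and commutativity reduces to the identity $(\bal\bga\bal^{-1},\iota)(\bal,\iota) = (\bal,\iota)(\bga,\iota)$ in $\fS^n \rtimes_\phi \iota$, which is immediate.
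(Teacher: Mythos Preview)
Your proposal is correct and takes essentially the same approach as the paper: the paper's proof is precisely the commutative-square argument you describe at the end, reducing everything to the identity $(\bal\bga\bal^{-1},\iota)(\bal,\iota) = (\bal,\iota)(\bga,\iota)$ together with associativity of the action. Your additional explicit check that $\bga' \neq \mathbf{1}$ is a nice touch the paper leaves implicit.
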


\begin{proof}
  Associativity of the action and a simple calculation in $\fS^n \rtimes_{\phi}  S_n$ verify that the following diagram is commutative, which proves the result.
\[
\xymatrixcolsep{6pc}
\xymatrix{ 
(G,\ell_G) \ar[r]^-{\left(\bal\bga \bal^{-1},\iota\right)}  & (G,\ell_G)\\
(H,\ell_H) \ar[u]^{(\bal,\iota)}   
\ar[r]^-{(\bga, \iota)} 
& (H,\ell_H) \ar[u]_{(\bal,\iota) }
}
\]
\end{proof}

A consequence of Proposition \ref{canonical prop}, given in the following theorem, is that canonical self-twuality is propagated throughout the entire orbit of a graph.

\begin{theorem} \label{canonical equivs}

Let $(F, \ell_F)$ be a graph.  Then the following are equivalent:

\begin{enumerate}
\item \label{st member} There is a graph $(H, \ell_H) \in Orb_{\iota}(F, \ell_F)$ that is canonically self-twual, i.e. self-dual, -Petrial, or -Wilsonial (respectively, self-trial);
\item \label{st every} Every graph $(G, \ell_G)$ in $Orb_{\iota}(F, \ell_F)$ is canonically self-$\bga'$ where every element of $\bga'$ has order 2 (respectively, 3); 
\item \label{st one} There exists a graph $(G, \ell_G)$ in $Orb_{\iota}(F, \ell_F)$ that is canonically self-$\bga'$ where every element of $\bga'$ has order 2 (respectively, 3). 
\end{enumerate}
\end{theorem}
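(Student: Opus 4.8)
The plan is to prove the cycle of implications $(\ref{st member}) \Rightarrow (\ref{st every}) \Rightarrow (\ref{st one}) \Rightarrow (\ref{st member})$, most of which follow immediately from Proposition~\ref{canonical prop} together with elementary facts about the orbit $Orb_{\iota}$ and about orders of elements in $\fS$.

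First I would note the translation: a graph is canonically self-twual (self-dual, -Petrial, or -Wilsonial) exactly when it is canonically self-$\bga$ for the uniform $\bga$ all of whose entries equal $\delta$, $\tau$, or $\tau\delta\tau$ respectively, each of which has order $2$ in $\fS$; and it is canonically self-trial exactly when it is canonically self-$\bga$ for the uniform $\bga$ with all entries $\delta\tau$ (or $\tau\delta$), which has order $3$. Conversely, if every entry of $\bga$ has order $2$, then $\bga$ itself has order $2$ in $\fS^n$, so being canonically self-$\bga$ is a nontrivial stabilizer condition; the order-$3$ case is analogous.

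For $(\ref{st member}) \Rightarrow (\ref{st every})$: given the canonically self-twual $(H,\ell_H) \in Orb_{\iota}(F,\ell_F)$, it is canonically self-$\bga$ for a uniform $\bga$ whose entries have order $2$ (resp.\ $3$). Any $(G,\ell_G) \in Orb_{\iota}(F,\ell_F)$ lies in $Orb_{\iota}(H,\ell_H)$ since $Orb_{\iota}$ is an equivalence relation (the subgroup $\fS^n\rtimes_\phi\iota$ acts, and membership in an orbit is symmetric and transitive), so there is $\bal$ with $(\bal,\iota)(H,\ell_H)=(G,\ell_G)$. By Proposition~\ref{canonical prop}, $G$ is canonically self-$\bga'$ with $\bga' = \bal\bga\bal^{-1}$. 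Conjugation preserves the order of a group element, and since $\bga' = \bal\bga\bal^{-1}$ acts coordinatewise, each entry of $\bga'$ is a conjugate in $\fS$ of the corresponding entry of $\bga$ (after re-indexing by the permutation data inside $\bal$, which still only conjugates within $\fS$), hence has the same order $2$ (resp.\ $3$). This gives $(\ref{st every})$. The implication $(\ref{st every}) \Rightarrow (\ref{st one})$ is trivial since $Orb_{\iota}(F,\ell_F)$ is nonempty (it contains $(F,\ell_F)$).

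The main obstacle — and it is a mild one — is $(\ref{st one}) \Rightarrow (\ref{st member})$: here one starts only with the knowledge that \emph{some} $(G,\ell_G)$ in the orbit is canonically self-$\bga'$ with all entries of order $2$ (resp.\ $3$), and must recover an actual canonically self-dual, -Petrial, or -Wilsonial (resp.\ self-trial) graph with a uniform exponent. The key point is that the coordinatewise structure lets one handle each edge separately, and within $\fS\cong S_3$ every order-$2$ element is conjugate to every other order-$2$ element, while the two order-$3$ elements are conjugate to each other; so for each edge $e$ there is $h_e\in\fS$ conjugating $\bga'(e)$ to a fixed choice (say $\delta$, or $\delta\tau$). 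Assembling the $h_e$ into an element $\boldsymbol{h}\in\fS^n$ and applying Proposition~\ref{canonical prop} along $(\boldsymbol{h},\iota)$ moves $(G,\ell_G)$ to a graph in the orbit that is canonically self-$(\boldsymbol{h}\bga'\boldsymbol{h}^{-1})$, and $\boldsymbol{h}\bga'\boldsymbol{h}^{-1}$ is now the uniform vector with every entry $\delta$ (resp.\ $\delta\tau$). The only care needed is to confirm that the order-$2$ case lands on genuinely self-dual / -Petrial / -Wilsonial rather than a mix; but since the original uniform witness in $(\ref{st member})$ only needs to exist, and one is free to choose the target conjugacy representative, this is automatic. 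I would therefore conclude by remarking that the same argument, with ``order $2$'' replaced by ``order $3$'', handles the parenthetical self-trial statement verbatim, which is why the theorem bundles the two cases together.
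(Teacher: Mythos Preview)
Your proof is correct and follows essentially the same route as the paper's: both prove the cycle $(1)\Rightarrow(2)\Rightarrow(3)\Rightarrow(1)$ by applying Proposition~\ref{canonical prop} together with the fact that in $\fS\cong S_3$ conjugation preserves order and all elements of a given order form a single conjugacy class (the paper phrases this via the explicit conjugacy Table~\ref{table:cong}, you via the abstract statement). One small remark: your parenthetical about ``re-indexing by the permutation data inside $\bal$'' is unnecessary and slightly misleading, since in the canonical ($\iota$) setting $\bal\in\fS^n$ carries no permutation and $\bal\bga\bal^{-1}$ is literally coordinatewise conjugation $(\bga')_i=\al_i\ga_i\al_i^{-1}$.
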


\begin{proof}
The proof uses the conjugacies  for $S_3$ given in Table \ref{table:cong}.

\begin{table}
\renewcommand\arraystretch{1.3}
\setlength\doublerulesep{0pt}
\begin{tabular}{r||*{6}{c|}}
$\al\ga\al^{-1}$ & 1 &$ \tau $&$ \delta $&$ \tau\delta $&$ \delta\tau $&$ \tau\delta\tau$  \\
\hline\hline
1 & 1&$ \tau $&$ \delta $&$ \tau\delta $&$ \delta\tau $&$ \tau\delta\tau$  \\ 
\hline
$\tau$ &$ 1$ &$ \tau $&$ \tau \delta \tau$ & $\delta \tau$ & $\tau \delta$ & $\delta$  \\ 
\hline
$\delta$ & 1 &$\tau \delta \tau$ &$ \delta $&$ \delta \tau $&$ \tau \delta $&$ \tau $ \\ 
\hline
$\tau\delta$ & 1&$ \delta $&$ \tau \delta \tau $&$ \tau \delta $&$ \delta \tau $&$  \tau$ \\ 
\hline
$\delta \tau$ & 1 &$\tau \delta \tau $&$ \tau $&$ \tau \delta $&$ \delta \tau $&$ \delta $\\ 
\hline
$\tau \delta \tau$ & 1 &$\delta $&$ \tau $&$ \delta \tau $&$ \tau \delta $&$ \tau \delta \tau$\\ 
\hline
\end{tabular}
\vskip 5mm
\caption{Conjugacy Table ($\al$ labels the rows, $\ga$ the columns)}
\label{table:cong}
\end{table}

To see that Item \ref{st member} implies Item \ref{st every}, suppose that $(H, \ell_H)$ is a canonically self-twual graph via uniform $\bga$.  Then by Theorem \ref{canonical prop}, every graph $(G, \ell_G)$ in $Orb_{\iota}(H, \ell_H)=Orb_{\iota}(F, \ell_F)$ is canonically self-$\bal \bga \bal^{-1}$ for some $\bal$.  However, conjugation preserves order here, so by Table \ref{table:cong}, if $\bga$ is $\delta-$, $\tau-$, or $\tau \delta \tau-$ uniform, then each element in $\bal \bga \bal^{-1}$ is in $\{\tau, \delta, \tau \delta \tau \}$.  Similarly, if $(G, \ell_G)$ is canonically self-trial, then every element of  $\bal \bga \bal^{-1}$ must be in $\{ \tau \delta, \delta \tau \}$.  Note that $\bal$ may be trivial,  in which case $\bga =\bga'$ and still has every element of order 2 or 3.

\medskip

That Item \ref{st every} implies Item \ref{st one} is immediate.  

\medskip

To show that Item \ref{st one} implies Item \ref{st member}, assume $(G, \ell_G)$ is a graph in $Orb_{\iota}(F, \ell_F)$ that is canonically self-$\bga'$ where every entry of $\bga'$ is in $\{\tau, \delta, \tau \delta \tau \}$.  We may then use the conjugacy table to select elements of $\bal$ so that $\bal \bga' \bal^{-1}$  is $\delta-$, $\tau-$, or $\tau \delta \tau-$ uniform. With this, $(H, \ell_H) = (\bal, \iota)(G, \ell_G)$ is the canonically self-dual, -Petrial, or -Wilsonial graph we seek, since if $\bga$ is the desired twuality, then $(\bga, \iota) (H, \ell_H) = (\bga, \iota) (\bal, \iota)(G, \ell_G) = (\bal, \iota)(\bga', \iota)(\bal, \iota)^{-1} (\bal, \iota)(G, \ell_G)=(\bal, \iota)(\bga', \iota)(G, \ell_G)=(\bal, \iota)(G, \ell_G)=(H, \ell_H)$.  An analogous approach proves the self-trial case. 
\end{proof}

The general case where $(G, \ell_G)$ is any graph in the orbit of $(H, \ell_H)$ (that is, in $Orb(H, \ell_H)$, not necessarily $Orb_{\iota} (H, \ell_H)$) and where $(H, \ell_H)$ is self-$\bga$  but not necessarily canonically so, is essentially the same, but involves keeping track of the permutations. Although the conjugation is somewhat obfuscated by the appearance of the permutations, we are simply reordering to be sure that the conjugation is applied to the correct edges at each step.

\begin{theorem}\label{natural prop}
Suppose $(H, \ell_H)$ is self-$\bga$  via $\mu$ and that $(G, \ell_G) \in Orb(H, \ell_H)$ with $(\bal, \pi)(H,\ell_H)=(G, \ell_G)$.  Then $(G, \ell_G)$ is self-$\bga'$ via $\mu'$ where  $\bga'$ is a `near-conjugate' of $\bga$ by $\bal$, with 
$$\bga' = \bal \cdot \bga \pi^{-1} \cdot \bal^{-1} \pi \mu^{-1} \pi^{-1},$$  and $ \mu'= \pi \mu \pi^{-1}.$  
\end{theorem}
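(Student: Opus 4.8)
\emph{Proof proposal.} The plan is to recognize that $(\bga',\mu')$ is, coordinate by coordinate, nothing more than the conjugate $(\bal,\pi)\,(\bga,\mu)\,(\bal,\pi)^{-1}$ computed inside the semidirect product $\fS^n\rtimes_\phi S_n$. Thus the theorem is the explicit form of the statement that, since $(\bal,\pi)$ carries $(H,\ell_H)$ to $(G,\ell_G)$ and $(\bga,\mu)$ stabilizes $(H,\ell_H)$, the conjugate of $(\bga,\mu)$ by $(\bal,\pi)$ stabilizes $(G,\ell_G)$ --- exactly the refinement of the classical ``new self-dualities from old by conjugation'' principle, and the direct analogue of Proposition \ref{canonical prop} with $(\bal,\iota)$ replaced by $(\bal,\pi)$.

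First I would restate the hypotheses in group-action language: $(H,\ell_H)$ self-$\bga$ via $\mu$ says $(\bga,\mu)(H,\ell_H)=(H,\ell_H)$, and we are given $(\bal,\pi)(H,\ell_H)=(G,\ell_G)$, hence also $(\bal,\pi)^{-1}(G,\ell_G)=(H,\ell_H)$. Setting $(\bga',\mu'):=(\bal,\pi)(\bga,\mu)(\bal,\pi)^{-1}$ and using associativity of the action (Proposition \ref{semidirect}),
\[
(\bga',\mu')(G,\ell_G)=(\bal,\pi)\Big[(\bga,\mu)\big[(\bal,\pi)^{-1}(G,\ell_G)\big]\Big]=(\bal,\pi)\big[(\bga,\mu)(H,\ell_H)\big]=(\bal,\pi)(H,\ell_H)=(G,\ell_G),
\]
so $(G,\ell_G)$ is fixed by $(\bga',\mu')$, and $(\bga',\mu')$ is non-trivial since it is a conjugate of the non-trivial $(\bga,\mu)$. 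Conceptually this is the commutative square
\[
\xymatrix{(G,\ell_G)\ar[r]^-{(\bga',\mu')} & (G,\ell_G)\\ (H,\ell_H)\ar[u]^{(\bal,\pi)}\ar[r]^-{(\bga,\mu)} & (H,\ell_H)\ar[u]_{(\bal,\pi)}}
\]
whose commutativity is immediate from the definition of $(\bga',\mu')$.

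It then remains only to expand $(\bal,\pi)(\bga,\mu)(\bal,\pi)^{-1}$ into coordinates, which is routine. Using the multiplication rule $(\bga_1,\pi_1)(\bga_2,\pi_2)=(\bga_1\,\phi_{\pi_1}(\bga_2),\pi_1\pi_2)$ with $\phi_\pi(\bga)=\bga\pi^{-1}$, together with the inverse formula $(\bal,\pi)^{-1}=(\bal^{-1}\pi,\pi^{-1})$ recorded above, one multiplies $(\bga,\mu)$ by $(\bal^{-1}\pi,\pi^{-1})$ to get $\big(\bga\cdot\bal^{-1}\pi\mu^{-1},\ \mu\pi^{-1}\big)$, and then left-multiplies by $(\bal,\pi)$ and applies $\phi_\pi$ to arrive at first coordinate $\bal\cdot\bga\pi^{-1}\cdot\bal^{-1}\pi\mu^{-1}\pi^{-1}$ and second coordinate $\pi\mu\pi^{-1}$, which are precisely the $\bga'$ and $\mu'$ of the statement; the bookkeeping identities (\ref{distribute pi})--(\ref{inverse Gamma pi}) are exactly what is needed to slide the trailing permutations past the $\fS^n$-factors. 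The one place the permutations cause any trouble --- and the only subtlety in the whole argument --- is keeping straight that post-composition with $\pi$ on an indexed tuple reindexes by $\pi^{-1}$ (as noted before Definition \ref{gammadef}), so that $\phi_\pi$ shifts $\bga$ by $\pi^{-1}$ rather than $\pi$; once that convention is fixed, the coordinate computation is a two-line manipulation. I expect no genuine obstacle beyond this indexing care, since the only non-formal ingredient, associativity of the action, is already in hand from Proposition \ref{semidirect}.
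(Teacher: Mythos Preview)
Your proposal is correct and is essentially the same argument as the paper's: both reduce the statement to the commutativity of the square with $(\bal,\pi)$ on the verticals and verify the semidirect-product identity $(\bga',\mu')(\bal,\pi)=(\bal,\pi)(\bga,\mu)$, using only associativity of the action from Proposition~\ref{semidirect}. The only cosmetic difference is that you \emph{define} $(\bga',\mu')$ as the conjugate and then expand to recover the stated formulas, whereas the paper starts from the stated formulas and multiplies out $(\bga',\mu')(\bal,\pi)$ to reach $(\bal,\pi)(\bga,\mu)$; the underlying computation is identical.
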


\begin{proof}
It suffices to verify the commutativity of the following diagram:
\[
\xymatrixcolsep{10pc}
\xymatrix{ 
(G,\ell_G) \ar[r]^-{\left(\bal \cdot \bga \pi^{-1} \cdot \bal^{-1} \pi \mu^{-1} \pi^{-1}, \pi \mu \pi^{-1}\right)}  & (G,\ell_G)\\
(H,\ell_H) \ar[u]^{(\bal,\pi)}   
\ar[r]^-{(\bga, \mu)} 
& (H,\ell_H) \ar[u]_{(\bal,\pi) }
}
\]

By associativity of the action, this follows from the following calculation in $\fS^n \rtimes_{\phi}  S_n$.
  \[
  \begin{array}{rcl}
  
  \left(\bal \cdot \bga \pi^{-1} \cdot \bal^{-1} \pi \mu^{-1} \pi^{-1}, \pi \mu \pi^{-1}\right)(\bal,\pi)
  &=&   \left(\bal \cdot \bga \pi^{-1} \cdot \bal^{-1} \pi \mu^{-1} \pi^{-1} \cdot \bal\pi\mu^{-1}\pi^{-1}, \pi \mu \pi^{-1}\right)\\
  
  &=&  (\bal \cdot \bga\pi^{-1},\pi\mu) \\
 
  &=& (\bal,\pi)(\bga,\mu).
  
   \end{array}
  \]
\end{proof}

Since $\fS$ is not commutative, we write $\prod_{i=m}^{1} {\g_{i}} $ in the following theorem statement to indicate the product $\g_m \g_{m-1} \ldots \g_1$.  We also write $ord(g)$ for the order of a group element in $\fS$.

\begin{theorem} \label{natural equivs}  
Let $(F, \ell_F)$ be a graph, $g \in \fS$, and $\mu$ a permutation.  Then the following are equivalent:
\begin{enumerate}
\item \label{wk member} There is a graph $(H, \ell_H) \in Orb(F, \ell_F)$  that is  self-$\bga = (g, g, \ldots , g)$  via the permutation $\mu$, i.e. $(H, \ell_H)$ is self-dual, -Petrial, -Wilsonial, or -trial;     

\item \label{wk every} Every graph $(G, \ell_G) \in Orb(F, \ell_F)$ is self-$\bga'= (\g_1, \g_2, \ldots, \g_n)$ via $\mu'$, where $\mu'=\pi \mu \pi^{-1}$ for some $\pi$, and where $\bga'$  has the property that if $C=(c_1, c_2, \ldots, c_m)$ is a cycle in the cycle decomposition of $\mu'$, then $ord (\prod_{i=m}^{1} {\g_{c_i}} ) = ord(g ^m)$  ;

\item \label{wk one} 
There exists a graph $(G, \ell_G) \in Orb (F, \ell_F)$ that is self-$\bga'= (\g_1, \g_2, \ldots, \g_n)$ via $\mu'$, where $\mu'=\pi \mu \pi^{-1}$ for some $\pi$, and where $\bga'$  has the property that if $C=(c_1, c_2, \ldots, c_m)$ is a cycle in the cycle decomposition of $\mu'$, then 
$ord (\prod_{i=m}^{1} {\g_{c_i}} ) = ord(g ^m)$.

\end{enumerate}
\end{theorem}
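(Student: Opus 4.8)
The plan is to run the argument in parallel with the proof of Theorem~\ref{canonical equivs}, using Theorem~\ref{natural prop} in place of Proposition~\ref{canonical prop} and using Lemma~\ref{selfgamma} to pass between conjugate permutations. The guiding observation is that ``$(H,\ell_H)$ is self-$\bga$ via $\mu$'' says precisely that $(\bga,\mu)$ lies in the stabilizer of $(H,\ell_H)$ in $\fS^n\rtimes_\phi S_n$, that stabilizers of graphs in a common orbit are conjugate subgroups, and that for an element $(\bbe,\nu)$ of $\fS^n\rtimes_\phi S_n=\fS\wr S_n$ and a cycle $C=(c_1,\dots,c_m)$ of $\nu$ the ``cycle product'' $\prod_{i=m}^{1}\bbe(c_i)$ is, up to conjugacy in $\fS$, exactly the datum that conjugation preserves. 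Since $\fS\cong S_3$, being conjugate in $\fS$ is the same as having equal order (see Table~\ref{table:cong}), which is why the condition in items (\ref{wk every}) and (\ref{wk one}) is phrased via $ord$; note also that it is $ord(g^m)$, not $ord(g)$, because traversing a length-$m$ cycle applies $g$ a total of $m$ times before the permutation part returns to the identity.

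For (\ref{wk member})$\Rightarrow$(\ref{wk every}) I would take an arbitrary $(G,\ell_G)\in Orb(F,\ell_F)=Orb(H,\ell_H)$, write $(\bal,\pi)(H,\ell_H)=(G,\ell_G)$, and invoke Theorem~\ref{natural prop} to conclude that $(G,\ell_G)$ is self-$\bga'$ via $\mu'=\pi\mu\pi^{-1}$ with $(\bga',\mu')=(\bal,\pi)(\bga,\mu)(\bal,\pi)^{-1}$. It then remains to check the cycle condition, for which I would establish a short lemma: expanding $(\bbe,\nu)^{m}$ on the coordinates in a length-$m$ cycle $C=(c_1,\dots,c_m)$ of $\nu$ shows the permutation part is trivial on $C$ and the $\fS^n$-part takes, at the coordinates of $C$, the cyclic rotations of $\prod_{i=m}^{1}\bbe(c_i)$, so this product is well defined up to conjugacy in $\fS$; a further direct computation shows that conjugating $(\bbe,\nu)$ by anything carries $C$ to a cycle of the same length and replaces its cycle product by an $\fS$-conjugate. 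Applying this with $\big((g,\dots,g),\mu\big)$, whose cycle product on any length-$m$ cycle is $g^m$, gives that $\prod_{i=m}^{1}\g_{c_i}$ is $\fS$-conjugate to $g^m$, hence $ord\big(\prod_{i=m}^{1}\g_{c_i}\big)=ord(g^m)$, for every cycle of $\mu'$. The implication (\ref{wk every})$\Rightarrow$(\ref{wk one}) is immediate.

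For (\ref{wk one})$\Rightarrow$(\ref{wk member}) I would start with $(G,\ell_G)\in Orb(F,\ell_F)$ that is self-$\bga'=(\g_1,\dots,\g_n)$ via $\mu'=\pi\mu\pi^{-1}$ and satisfies the cycle condition. Applying Lemma~\ref{selfgamma} with the permutation $\pi^{-1}$ replaces $(G,\ell_G)$ by $(G,\ell_G\pi)=(\mathbf 1,\pi^{-1})(G,\ell_G)\in Orb(F,\ell_F)$, which is self-$\bga'\pi$ via $\mu$; since conjugating $\mu'$ back to $\mu$ merely relabels cycles by $\pi$, the cycle condition survives, so I may assume $\mu'=\mu$. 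Now I want $\bal\in\fS^n$ making $(\bal,\iota)(G,\ell_G)$ self-$(g,\dots,g)$ via $\mu$; by Theorem~\ref{natural prop} this graph is self-$\bga''$ via $\mu$ with $\bga''(i)=\bal(i)\,\g_i\,\bal(\mu^{-1}(i))^{-1}$, so I must solve $\bal(i)\,\g_i\,\bal(\mu^{-1}(i))^{-1}=g$ for all $i$. Along a cycle $C=(c_1,\dots,c_m)$ of $\mu$ this is the first-order recurrence $\bal(c_j)=g\,\bal(c_{j-1})\,\g_{c_j}^{-1}$ with indices mod $m$; choosing $\bal(c_1)$ and propagating once around $C$, the single consistency requirement obtained on returning to $c_1$ simplifies to $\bal(c_1)\big(\prod_{i=m}^{1}\g_{c_i}\big)\bal(c_1)^{-1}=g^{m}$. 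By the cycle condition and the fact that order determines conjugacy class in $\fS\cong S_3$, such a $\bal(c_1)$ exists; choosing it, propagating, and repeating over all cycles produces $\bal$, and then $(\bal,\iota)(G,\ell_G)\in Orb(F,\ell_F)$ is self-$(g,\dots,g)$ via $\mu$, i.e.\ self-dual, self-Petrial, self-Wilsonial, or self-trial according as $g$ is $\delta$, $\tau$, $\tau\delta\tau$, or one of $\tau\delta,\delta\tau$.

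I expect the main obstacle to be not conceptual but the bookkeeping that turns ``$(\bga',\mu')$ is conjugate to $\big((g,\dots,g),\mu\big)$'' into the explicit statement about the products $\prod_{i=m}^{1}\g_{c_i}$: one must pin down the precise order in which the entries multiply (the reversed product $\prod_{i=m}^{1}$, rather than $\prod_{i=1}^{m}$, is forced by the $\mu^{-1}$ appearing in the semidirect-product multiplication) and then carry out the telescoping computation in the (\ref{wk one})$\Rightarrow$(\ref{wk member}) direction carefully. Once those are in hand, everything else reduces to the identities of Section~\ref{sec.Ribbon Group}, Lemma~\ref{selfgamma}, and the elementary conjugacy structure of $S_3$.
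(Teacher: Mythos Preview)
Your proposal is correct and follows essentially the same route as the paper: you use Theorem~\ref{natural prop} to pass from $(H,\ell_H)$ to an arbitrary $(G,\ell_G)$ for (\ref{wk member})$\Rightarrow$(\ref{wk every}), note that (\ref{wk every})$\Rightarrow$(\ref{wk one}) is trivial, and for (\ref{wk one})$\Rightarrow$(\ref{wk member}) first invoke Lemma~\ref{selfgamma} to reduce to $\mu'=\mu$ and then build $\bal$ cycle-by-cycle by solving the first-order recurrence along each cycle, with the consistency condition supplied by the order hypothesis together with the conjugacy structure of $\fS\cong S_3$. Your framing in terms of stabilizers and cycle products as conjugacy invariants of the wreath product is slightly more conceptual than the paper's explicit coordinate computations, but the underlying calculations (the telescoping along a cycle yielding $\prod_{i=m}^{1}\g_{c_i}$ conjugate to $g^m$, and its converse) are the same.
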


\begin{proof}
To see that Item \ref{wk member} implies Item \ref{wk every}, suppose that $(H, \ell_H)$ is a  self-twual graph via uniform $\bga$ and permutation $\mu$, and $(G, \ell_G)$ is a graph in $Orb(H, \ell_H)=Orb (F, \ell_F)$.  Then, by Theorem \ref{natural prop}, $(G, \ell_G)$ is  self-$\bga' = \bal \cdot \bga \pi^{-1} \cdot \bal^{-1} \pi \mu^{-1} \pi^{-1} $  via $\mu'= \pi \mu \pi^{-1}$ where  $\bal$ and $\pi$ satisfy $(\bal, \pi)(H, \ell_H) = (G, \ell_G)$. In particular, for this $\bal$ and $\pi$,  we have that
\begin{equation}\label{alphagamma}
(\bga, \mu) \, = \, 
(\bal, \pi)^{-1}(\bga', \mu')(\bal, \pi) \, = \,
\left((\bal^{-1} \cdot \bga' \cdot \bal\mu'^{-1})\pi,\, \pi^{-1}\mu'\pi\right).
\end{equation}

Hence, $(\bal^{-1} \cdot \bga' \cdot \bal\mu'^{-1})\pi=\bga$, and so $(\bal^{-1} \cdot \bga' \cdot \bal\mu'^{-1})=\bga\pi^{-1}$.  However, $\bga$ is uniform, so $ \bga\pi^{-1}= \pi$, and we have that
\begin{equation} \label{setcycles}
    (\bal^{-1} \cdot \bga' \cdot \bal\mu'^{-1})=\bga.
\end{equation}

We now write $\bga'=(\g_1, \g_2, \ldots, \g_n)$ and also write  $\bal=(\al_1, \al_2, \ldots, \al_n)$, recalling that  $\bal\mu'^{-1}= (\al_{\mu'^{-1}(1)}, \al_{\mu'^{-1}(2)}, \ldots, \al_{\mu'^{-1}(n)})$.  Considering each entry of Equation \ref{setcycles} individually,  this yields that 
\begin{equation}\label{entries} 
\al_i^{-1} \cdot \g_i \cdot \al_{\mu'^{-1}(i)}=g  \text{ for all } i.
\end{equation}
  In particular, if $C=(c_1, c_2, \ldots, c_m)$ is a cycle in the cycle decomposition of $\mu'$ (we use the convention that a cycle begins with its lowest number), then Equation \ref{entries} becomes
  \begin{equation} \label{cycleconj}
    \al_{c_i}^{-1} \cdot \g_{c_i} \cdot \al_{ c_{i-1}}    = g  \text{ for all } i \in \{1, 2, \ldots,m\}, \text { where we write } c_0= c_m.  
  \end{equation}
  
  Thus, $\al_{ c_{m-1} }=\g^{-1}_{c_m} \cdot \al_{c_m} \cdot g$ , and $\al_{c_{m-2}}= \g^{-1}_{c_{m-1}} \cdot \g^{-1}_{c_m} \cdot \al_{c_m }\cdot g^2$,    and in general 
  \begin{equation} \label{iterate}
  \al_{c_{m-i}}= \left ( \prod_{j=m-i+1}^{m}{\g_{c_j}^{-1}} \right ) \cdot \al_{c_{m}} \cdot g^i.
\end{equation}  
  
  When $i=m$, it follows that $\al_{c_{m}}= \left ( \prod_{j=1}^{m}{\g_{c_j}^{-1}} \right ) \cdot \al_{c_{m}} \cdot g^m$   and hence $\left ( \prod_{j=m}^{1}{\g_{c_j} }\right ) =\al_{c_{m}} \cdot g^m \cdot \al_{c_{m}}^{-1}$.   Since conjugation preserves order in $\fS$, this proves the implication.

\medskip
That Item \ref{st every} implies Item \ref{wk one} is immediate.  
\medskip

To show that Item \ref{wk one} implies Item \ref{wk member},
suppose $(G,\ell_G) \in Orb (F, \ell_F)$ is self-$\bga'= (\g_1, \g_2, \ldots, \g_n)$ via $\mu'=\pi \mu \pi^{-1}$ for some $\pi$, and that $\bga'$  has the property that if $C=(c_1, c_2, \ldots, c_m)$ is a cycle in the cycle decomposition of $\mu'$, then 
$ord (\prod_{i=m}^{1} {\g_{c_i}} ) = ord(g ^m)$.  

We observe that we can assume $\mu'=\mu$, i.e. that we can take $\pi = \iota$.  This follows from Lemma \ref{selfgamma}, which says that we can simply re-order the edges of $(G,\ell_G)$ so that $G$ is self-$\bga' \pi$ via $\mu=\mu'$.    We then note that since in $\prod_{i=m}^{1} {\g_{c_i}}$ the indices are from a cycle of $\mu'$, it follows that $ord (\prod_{i=m}^{1} {\g_{\mu'(c_i})}  = ord(\prod_{i=m}^{1} {\g_{c_{i+1}}} ) = ord (\prod_{i=m}^{1} {\g_{c_i}} ) = ord(g ^m)$.

Thus, Item \ref{wk one} implies that there is some $(G,\ell_G) \in Orb (F, \ell_F)$ that is self-$\bga'= (\g_1, \g_2, \ldots, \g_n)$ via $\mu'=\mu$, and that $\bga'$  has the property that if $C=(c_1, c_2, \ldots, c_m)$ is a cycle in the cycle decomposition of $\mu'$, then 
$ord (\prod_{i=m}^{1} {\g_{c_i}} ) = ord(g ^m)$.

We will construct $\bal \in \fS^n$ as follows.  Given a cycle $C$ of length $m$ in the cycle decomposition of $\mu'$, since $ord (\prod_{j=m}^{1} {\g_{c_j}} ) = ord(g ^m)$, we may use the conjugacies in Table \ref{table:cong} to choose $\al_{c_m}$ such that $ \al_{c_{m}} \cdot \left( \prod_{j=m}^{1}{\g_{c_j} }\right ) \cdot \al_{c_{m}}^{-1} = g^m$.  Then, for $1\leq i < m$, recursively define  $\al_{c_i}=g^{-1} \cdot \al_{c_{i+1}} \cdot \g_{i+1}$. Note that we have $\al_{c_i}=g^{-(m-1)} \cdot \al_{c_m} \cdot \left( \prod_{j=m}^{i+1}{\g_{c_j} }\right )$ for $1\leq i < m$, and also that $\al_{c_1}=g \cdot \al_{c_{m}} \cdot \g^{-1}_{1}$.

Then, for all $i$, and reading indices of the $c_i$'s mod $m$, we find that 
\begin{equation}\label{alpha g}
  g= \al_{c_{i+1}} \cdot \g_{i+1} \cdot \al^{-1}_{\mu'^{-1}(c_{i+1})}.  
\end{equation}

We repeat this, choosing some suitable $\al_{c_m}$ for each cycle of length $m$ in $\mu'$ for all lengths $m$.

Since Equation \ref{alpha g} holds for all entries and $\mu = \mu'$, it follows that $(\bga, \mu)=(\bal, \iota)(\bga', \mu') (\bal, \iota)^{-1}.$

With this, $(H, \ell_H) = (\bal, \iota)(G, \ell_G)$ is the desired  self-dual, -Petrial, -Wilsonial, or respectively self-trial graph, since then  $(\bga, \mu)(H, \ell_H) = (\bga, \mu)(\bal, \iota)(G, \ell_G) = (\bal, \iota)(\bga', \mu') (\bal, \iota)^{-1}(\bal, \iota)(G, \ell_G) = (\bal, \iota)(\bga', \mu')(G, \ell_G)=(\bal, \iota)(G, \ell_G)=(H, \ell_H)$.
\end{proof}

Observe that $\bal$, as constructed in the proof of Theorem \ref{natural equivs} is not uniquely determined, as each cycle of $\mu'$ gives rise to several possibilities. Each of the resulting $\bal$'s can give a different self-twual graph $H$, although some of them might possibly be isomorphic to one another.

\medskip
The impact of Theorems \ref{canonical equivs} and \ref{natural equivs} is that if we find any graph $G$ that is self-$\bga$ via a permutation $\mu$ for any $\bga$ and $\mu$, then we can quickly test for the existence of an $\bal$ so that $\bga $ has the form $\bal \bga' (\bal^{-1} \mu^{-1})$ for some uniform $\bga'$, and thus identify self-twual graphs in the orbit of $G$.  We will see this in action in the following sections, starting in Section~\ref{sec:OEB} by identifying graphs for which it is reasonable to test for being self-$\bga$ for some $\bga$, and then in Section \ref{Code discussion} giving a polynomial time algorithm to determine the existence of an $\bal$.

\medskip
Note that in the special case that $\mu=\iota$, Theorems \ref{natural prop} and \ref{natural equivs} reduce to Theorems \ref{canonical prop} and \ref{canonical equivs}, respectively. In the case of Theorem \ref{canonical prop},  we have that $\bga'$ is trivial if and only if $\bga$ is.  However, in Theorem \ref{natural prop}, if $\bga = \mathbf{1}$ but $\mu \neq \iota$ then $H$ has a non-trivial automorphism group, with $\mu \in \Aut\!(H)$.  In this case, it is possible that $\bga'$ may be non-trivial, which leads to the following theorem and a new strategy for generating self-twual graphs.

\begin{theorem} \label{automorph duals} Suppose the permutation $\mu$ corresponds to an element of the automorphism group of $H$ (as an embedded graph), that is,  $(\mathbf{1}, \mu)(H, \ell_H)=(H, \ell_H)$. Then for every ribbon group element $\bal$ we have that $(G, \ell_G) = (\bal, \iota)(H, \ell_H)$  is self-$\bga'$ via $\mu$ with $\bga'= \bal \cdot \bal^{-1} \mu^{-1}$, and $\bga'$ non-trivial exactly when  $\bal \neq \bal \mu^{-1} $.

\end{theorem}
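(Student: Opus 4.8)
The plan is to read this as the degenerate case of Theorem \ref{natural prop} in which the starting graph is self-$\bga$ for $\bga = \mathbf{1}$ and we move along the orbit by $(\bal,\iota)$, i.e.\ with $\pi = \iota$; substituting $\bga=\mathbf{1}$, $\pi=\iota$ into the near-conjugate formula $\bga' = \bal\cdot\bga\pi^{-1}\cdot\bal^{-1}\pi\mu^{-1}\pi^{-1}$ collapses it to $\bga' = \bal\cdot\bal^{-1}\mu^{-1}$ and $\mu'=\pi\mu\pi^{-1}$ to $\mu$, which is exactly the asserted data. Strictly speaking Definition \ref{gammadef} reserves the word ``self-$\bga$'' for non-identity $\bga$, so rather than quote Theorem \ref{natural prop} verbatim I would reuse the identity in $\fS^n\rtimes_\phi S_n$ underlying its proof (which never uses $\bga\neq\mathbf{1}$), or, cleaner still, verify the required fixed-point equation directly. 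Concretely, with $\bga' := \bal\cdot\bal^{-1}\mu^{-1}$ the goal is $(\bga',\mu)(G,\ell_G) = (G,\ell_G)$, and since $(G,\ell_G) = (\bal,\iota)(H,\ell_H)$ and the action is associative, it suffices to check the group equality $(\bga',\mu)(\bal,\iota) = (\bal,\mu)$ and then that $(\bal,\mu)(H,\ell_H) = (G,\ell_G)$.

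For the group equality I would use the multiplication rule $(\bga_1,\pi_1)(\bga_2,\pi_2) = (\bga_1\phi_{\pi_1}(\bga_2),\pi_1\pi_2)$ together with $\phi_\mu(\bal) = \bal\mu^{-1}$, so that $(\bga',\mu)(\bal,\iota) = (\bga'\cdot\bal\mu^{-1},\mu)$; then $\bga'\cdot\bal\mu^{-1} = \bal\cdot(\bal^{-1}\mu^{-1}\cdot\bal\mu^{-1}) = \bal\cdot(\bal^{-1}\bal)\mu^{-1} = \bal$ by Equation \ref{distribute pi}. Next, $(\bal,\mu)(H,\ell_H) = \bigl[(\bal,\iota)(\mathbf{1},\mu)\bigr](H,\ell_H) = (\bal,\iota)\bigl[(\mathbf{1},\mu)(H,\ell_H)\bigr] = (\bal,\iota)(H,\ell_H) = (G,\ell_G)$, where the third equality is precisely the hypothesis that $\mu$ realizes an automorphism of $H$. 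Chaining these, $(\bga',\mu)(G,\ell_G) = (\bga',\mu)(\bal,\iota)(H,\ell_H) = (\bal,\mu)(H,\ell_H) = (G,\ell_G)$, so $(G,\ell_G)$ is self-$\bga'$ via $\mu$ whenever $\bga'\neq\mathbf{1}$.

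Finally I would settle the triviality clause by reading $\bga' = \bal\cdot\bal^{-1}\mu^{-1}$ entrywise: writing $\bal = (\al_1,\dots,\al_n)$, its $i$-th coordinate is $\al_i\cdot\al_{\mu^{-1}(i)}^{-1}$, so $\bga' = \mathbf{1}$ iff $\al_i = \al_{\mu^{-1}(i)}$ for all $i$, i.e.\ iff $\bal$ is constant along the cycles of $\mu$, which is exactly the condition $\bal = \bal\mu^{-1}$; hence $\bga'$ is non-trivial precisely when $\bal \neq \bal\mu^{-1}$. I do not anticipate a genuine obstacle: the statement is an instantiation of the near-conjugation machinery plus bookkeeping in the semidirect product. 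The only points needing care are the definitional convention excluding $\bga=\mathbf{1}$ from ``self-$\bga$'' (handled by doing the verification directly rather than citing Theorem \ref{natural prop} as stated), and keeping the ``compose with $\mu^{-1}$'' operation $\bal\mapsto\bal\mu^{-1}$ notationally distinct from pointwise multiplication and inversion in $\fS^n$, so that the coordinate formula $\al_i\cdot\al_{\mu^{-1}(i)}^{-1}$ and the equivalence $\bga'=\mathbf{1}\iff\bal=\bal\mu^{-1}$ come out correctly.
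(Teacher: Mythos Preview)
Your proposal is correct and follows essentially the same approach as the paper: the paper's proof is a one-line citation of Theorem~\ref{natural prop} with $\bga = \mathbf{1}$ and $\pi = \iota$, together with the observation $(\bal^{-1}\mu^{-1})^{-1} = \bal\mu^{-1}$ for the triviality clause. You are in fact more careful than the paper in flagging that Definition~\ref{gammadef} formally excludes $\bga = \mathbf{1}$ and in supplying the direct semidirect-product verification, but the underlying computation is identical.
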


\begin{proof}

This follows from Theorem \ref{natural prop} with $\bga = \mathbf{1}$ and $\pi = \iota$, and the observation that $(\bal^{-1} \mu^{-1})^{-1} = \bal \mu^{-1} $. 
\end{proof}

Thus, if a graph $H$ has a non-trivial automorphism group, as is the case for example with regular maps, then there is potential for finding self-twual graphs in its orbit by seeking solutions to  $\bga'= \bal \cdot \bal^{-1} \mu^{-1}$ for uniform  $\bga'$.  We give a small example  applying Theorem \ref{automorph duals} in Example \ref{ex:automorph example}, and use Theorem \ref{automorph duals} to give an infinite family of graphs that are self-trial but neither self-dual nor self-petrial in Subsection \ref{automorph examples}.

\begin{example} \label{ex:automorph example}
Let $H$ be the bouquet with six edges shown on the left of Figure \ref{figure:automorph example}.  Let $\mu$ be the automorphism that rotates $H$ by $120^{\circ}$.  We solve $\bal \cdot \bal^{-1} \mu^{-1}=\{\delta \tau, \delta \tau,\delta \tau,\delta \tau,\delta \tau,\delta \tau \}$ to get $\bal = \{1, 1, \delta \tau, \delta \tau, \tau \delta, \tau \delta\}$.  With this, $(G, \ell_G) = (\bal, \iota)(H, \ell_H)$, shown on the right of Figure \ref{figure:automorph example}, is self-trial.  Since $G$ has a single loop it cannot be self-Petrial, and hence it also cannot be self-dual.
\end{example}
\begin{figure} 
{\includegraphics[height=35mm]{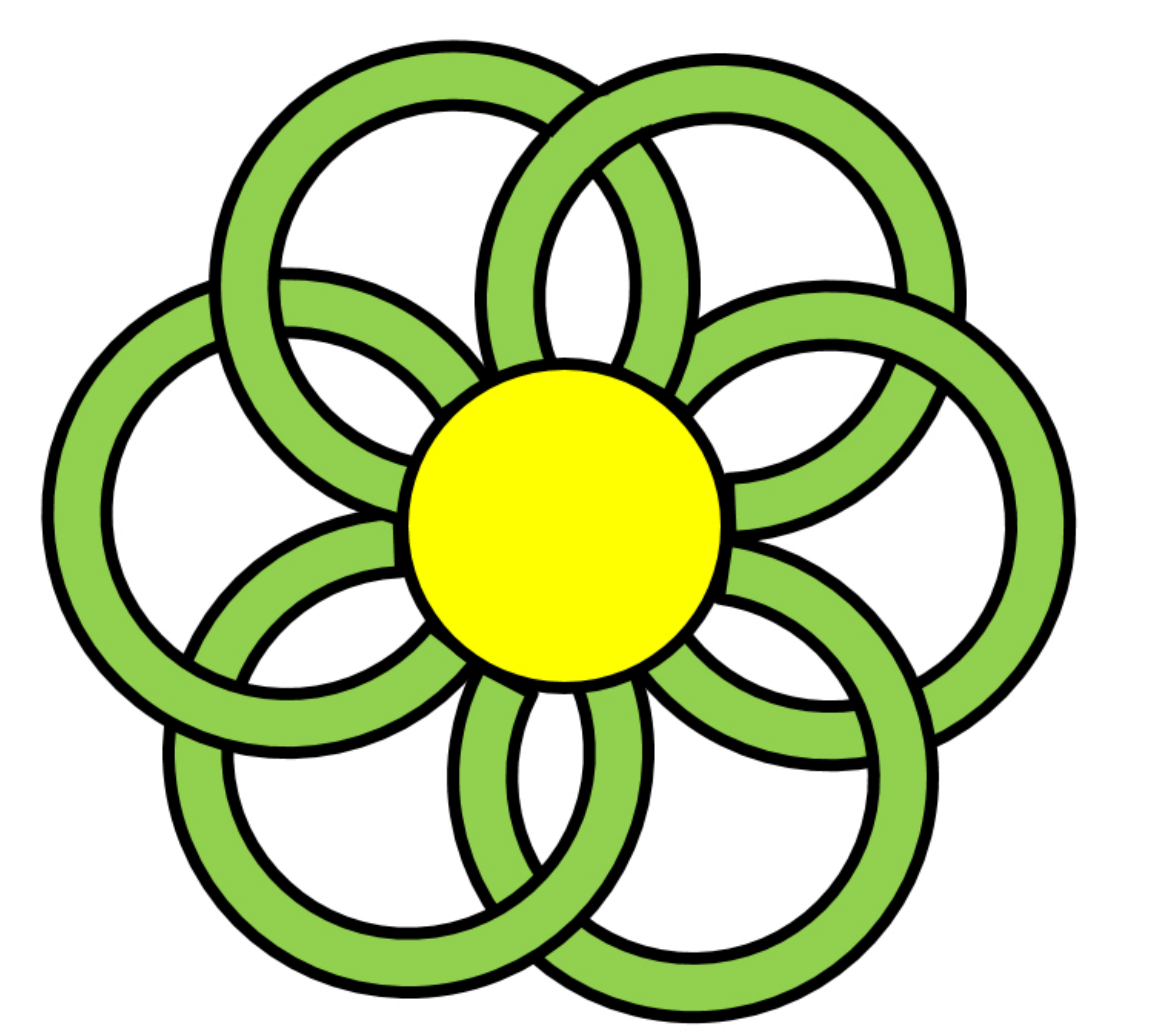}} 
\hspace{1cm} 
{\includegraphics[height=40mm]{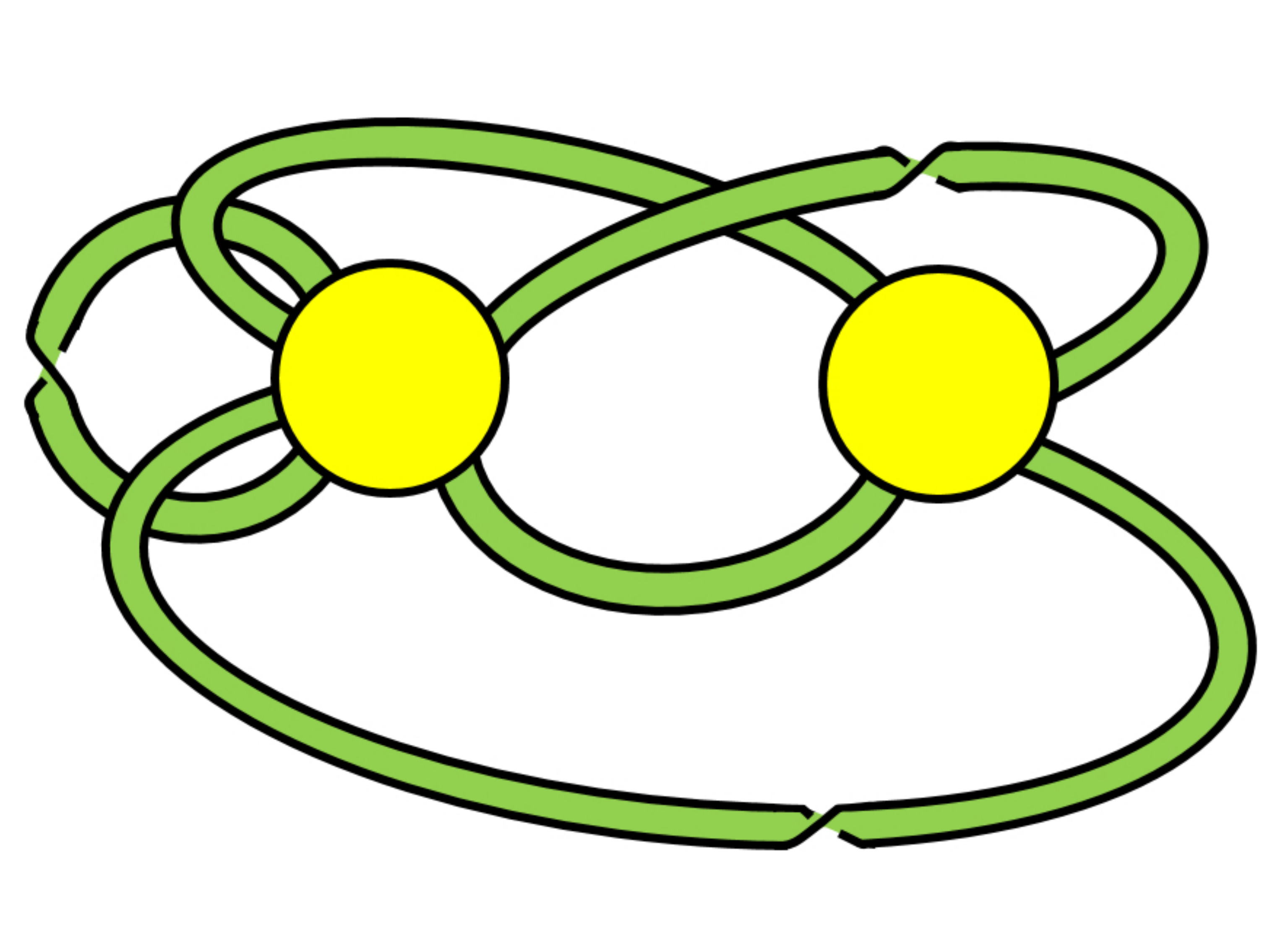}}
\caption{A graph $H$ (left) with non-trivial automorphism group, and a graph $G$ (right) in its orbit that is self-trial but neither self-dual nor self-petrial. }
\label{figure:automorph example}
\end{figure}

\section{Single vertex orientable ribbon graphs}\label{sec:OEB}

In this section we show that the search for self-twual graphs may be reduced to studying the self-$\bga$ properties of single-vertex orientable graphs, \emph{i.e.} orientable embedded boquets (OEBs).  Since, by the results of Section \ref{propagation},  any OEB encodes the twisted duality properties of every graph in its orbit, this provides a new approach for searching for self-dual, self-Petrial, self-trial, etc. graphs. 

We first note that every graph has an OEB in its orbit.  Thus, OEBs provide representatives of the orbits under the ribbon group action that may be more readily analyzed than general graphs.

\begin{proposition}\label{OEBexist}
If $(G,\ell_G)$ is a connected graph, then there is an OEB in its orbit.
\end{proposition}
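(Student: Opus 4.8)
The plan is to use the two operations $\delta$ (partial dual) and $\tau$ (twist), applied edge-by-edge, to transform an arbitrary connected ribbon graph $(G,\ell_G)$ into a one-vertex orientable ribbon graph, all while staying inside the orbit under $\fS^n \rtimes_\phi S_n$. Since applying a single $\delta$ or $\tau$ to one edge is realized by some $(\bga,\iota)$ with $\bga$ supported on that edge, any finite sequence of such moves keeps us in $Orb_\iota(G,\ell_G) \subseteq Orb(G,\ell_G)$, so it suffices to exhibit such a sequence ending at an OEB.

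First I would reduce to a one-vertex graph. I would work with the arrow presentation. If $G$ has more than one vertex, then since $G$ is connected there is a non-loop edge $e$ joining two distinct vertices $u$ and $v$; taking the partial dual $G^{\delta(e)}$ merges $u$ and $v$ into a single vertex (as in Figure~\ref{figure:EdgeOrbit}, where partial dual of a non-loop edge decreases the vertex count by one). Iterating, after at most $|V(G)|-1$ partial duals we reach a graph $(G',\ell')$ in the orbit with exactly one vertex, i.e.\ a bouquet. The second step is to make that bouquet orientable. A bouquet is non-orientable precisely when at least one of its loops is twisted (i.e.\ its two arrows in the arrow presentation point in opposite senses around the single vertex disc). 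For each such twisted loop $f$, applying the partial Petrial $\tau$ to $f$ reverses the direction of exactly one of the two arrows labelled $f$, turning the twisted loop into an untwisted one without altering any other edge. After applying $\tau$ to each twisted loop, every loop is untwisted, so the resulting bouquet is orientable; it still lies in the orbit, and we have produced the desired OEB.

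The step I expect to require the most care is verifying that the "make orientable" move interacts correctly with the "reduce to one vertex" move — specifically, that after collapsing to a single vertex I can then flip twists without reintroducing multiple vertices, and that orientability of a bouquet is indeed equivalent to all loops being untwisted in the chosen arrow presentation. One should also be slightly careful that $\tau$ on a loop genuinely toggles the twist of that loop in the bouquet (this is immediate from the arrow-presentation definition of $\tau$, since a bouquet has a single vertex so there is no propagation of twists to worry about, unlike the non-loop situation discussed around Figure~\ref{figure:bowtie}). None of this involves heavy computation; the whole argument is a short induction on $|V(G)|$ followed by a finite pass over the loops, and the bookkeeping is handled by the observation that each elementary move is an element of $\fS^n \rtimes_\phi \iota$.
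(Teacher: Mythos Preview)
Your proposal is correct and follows essentially the same approach as the paper: first iteratively apply $\delta$ to non-loop edges to collapse the connected graph to a single-vertex bouquet, then apply $\tau$ to each twisted loop to make it orientable. The paper's proof is slightly terser but the argument is identical, including the observation that in a bouquet the twisted edges are unambiguously identified so the second step is well defined.
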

\begin{proof}
We induct on the number of vertices of $G$. Suppose that $(G,\ell_G)$ has more than one vertex. Since $G$ is connected, it has some non-loop edge, say $\ell_G(i)$. Define $\bga_1\in\fS_n$ to have  $\bga_1(i)=\delta$ and $\bga_1(j) = 1$ for $j\neq i$, and define $(G_1,\ell_G) = (\bga_1,\iota)\cdot(G,\ell_G)$. Note that $G_1$ has one fewer vertex than $G$ does. Iterating this process, we obtain a bouquet $(G_k,\ell_G) = (\bga_k,\iota)\cdot\ldots \cdot(\bga_1,\iota)\cdot(G,\ell_G)$ in the orbit of $(G,\ell_G)$.

If $G$ is orientable then we are done. If not, then because $G$ has only a single vertex we can unambiguously identify a subset $S =\{ \ell(i_1),\ldots, \ell(i_r) \}$  of edges of $(G_k,\ell_G)$ which are twisted. Defining $\bga' \in \fS_n$ to have $\bga'(i) = \tau$ for $\ell(i)\in S$ and $\bga'(i) = \iota$ otherwise, we obtain an OEB $(\bga',\iota)\cdot(G_k,\ell_G)$ in the orbit of $(G,\ell_G)$.   
\end{proof}

We now set the stage for searching for self-twual graphs.  We again begin with the special case of being canonically self-$\bga$ and canonically self-twual, as here results can be found simply by checking a conjugation table.

\begin{proposition} \label{canonical OEB rep} 
Suppose $H$ is an OEB that is canonically self-$\bga$ for some non-trivial $\bga$.  Then every graph in its orbit is also canonically self-$\bga'$ where $\bga'$ is a conjugate of $\bga$.  
\end{proposition}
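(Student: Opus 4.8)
The plan is to reduce this statement to Proposition \ref{canonical prop}, which already handles the propagation of the canonical self-$\bga$ property along the $\iota$-orbit. First I would observe that the content of Proposition \ref{canonical OEB rep} is really just a special case of Proposition \ref{canonical prop}: if $H$ is an OEB that is canonically self-$\bga$ and $(G,\ell_G)\in Orb_\iota(H,\ell_H)$, say $(\bal,\iota)(H,\ell_H)=(G,\ell_G)$, then Proposition \ref{canonical prop} gives at once that $G$ is canonically self-$\bga'$ with $\bga'=\bal\bga\bal^{-1}$, which is a conjugate of $\bga$. So the only thing that needs an argument is why \emph{every} graph in the orbit of $H$ (that is, in $Orb(H,\ell_H)$, not merely $Orb_\iota(H,\ell_H)$) is reached in this way, up to the reordering bookkeeping.

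The key step, then, is to promote from $Orb_\iota$ to the full orbit $Orb$. Given an arbitrary $(G,\ell_G)\in Orb(H,\ell_H)$, write $(\bal,\pi)(H,\ell_H)=(G,\ell_G)$. I would factor this action as $(\bal,\pi)=(\bal,\iota)(\mathbf 1,\pi)$ (using the semidirect product multiplication from Proposition \ref{semidirect}), so that $(G,\ell_G)$ is obtained from $(H,\ell_H\pi^{-1})$ by applying $(\bal,\iota)$. Now $(H,\ell_H\pi^{-1})$ is the same ribbon graph $H$ with its edges reindexed, and since $H$ is canonically self-$\bga$, Lemma \ref{selfgamma} tells us (with $\mu=\iota$) that $(H,\ell_H\pi^{-1})$ is self-$\bga\pi^{-1}$ via $\pi\iota\pi^{-1}=\iota$ — i.e.\ canonically self-$\bga\pi^{-1}$. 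Then Proposition \ref{canonical prop}, applied to $(H,\ell_H\pi^{-1})$ and the element $(\bal,\iota)$, yields that $G$ is canonically self-$\bal(\bga\pi^{-1})\bal^{-1}$, again a conjugate (in $\fS^n$) of $\bga\pi^{-1}$, which permutes the entries of $\bga$ and conjugates each — and since the entries of $\bga$ lie in $\fS\cong S_3$, this is still a ``conjugate of $\bga$'' in the entrywise sense the proposition intends. (If one wants literally $\bal\bga\bal^{-1}$, note that when $\bga$ is uniform $\bga\pi^{-1}=\bga$, so the $\pi$ is invisible; in general the statement should be read as: $\bga'$ is obtained from $\bga$ by a permutation of the entries together with an entrywise conjugation.)

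The main obstacle — and it is a mild one — is purely notational: being careful that ``$\bga'$ is a conjugate of $\bga$'' is interpreted correctly in the presence of the edge-reordering permutation $\pi$, since conjugation in the full group $\fS^n\rtimes_\phi S_n$ mixes the $\fS^n$-part with the $S_n$-part (as already visible in the ``near-conjugate'' formula of Theorem \ref{natural prop}). For OEBs this is harmless because the isomorphisms of an OEB are exactly dihedral symmetries of its chord diagram, and one is free to compose with such a reordering; the cleanest writeup simply invokes Lemma \ref{selfgamma} to move to the canonical case and then quotes Proposition \ref{canonical prop}. I expect the author's proof to be one or two lines: ``This is immediate from Proposition \ref{canonical prop} (together with Lemma \ref{selfgamma} to handle the edge reordering).''
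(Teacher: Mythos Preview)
Your proposal is correct and matches the paper's approach exactly: the paper's proof is the single sentence ``This follows immediately from Theorem \ref{canonical prop} in the special case that $H$ is an OEB.'' Your additional paragraph invoking Lemma \ref{selfgamma} to promote from $Orb_\iota$ to the full orbit is more careful than the paper (which in context reads ``its orbit'' as $Orb_\iota$, consistent with the companion Theorem \ref{canonical all}), but it is correct and your prediction of the author's proof is on the nose.
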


\begin{proof}
This follows immediately from Theorem \ref{canonical prop} in the special case that $H$ is an OEB.
\end{proof}

The following Theorem \ref{canonical all} gives a practical application of Theorem \ref{canonical equivs} to generating all canonically self-twual graphs by confirming that determining the canonically self-$\bga$ properties of an OEB indeed captures all possible canonically self-twual graphs in its orbit.

\begin{theorem}\label{canonical all} Every canonically self-dual, -Petrial, or -Wilsonial (respectively, self-trial) graph $G$ is in the orbit of an OEB $H$ that is canonically self-$\bga'$ for some $\bga'$ of order 2 (respectively, 3).
\end{theorem}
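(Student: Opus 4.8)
The plan is to combine two facts already established in the excerpt: first, Proposition~\ref{OEBexist}, which guarantees that every connected graph has an OEB in its orbit; and second, Theorem~\ref{canonical equivs}, which says that canonical self-twuality propagates throughout an entire $Orb_{\iota}$-orbit. So the proof should proceed as follows. First I would take $G$ to be canonically self-dual, -Petrial, or -Wilsonial (respectively self-trial), so that $G$ is canonically self-$\bga$ for a uniform $\bga$ whose common entry has order $2$ (respectively $3$); this is just Definition~\ref{uniformdef}. Then I would invoke Proposition~\ref{OEBexist} to produce an OEB $H$ with $(H,\ell_H)\in Orb_{\iota}(G,\ell_G)$ --- note that $Orb_{\iota}$ suffices here because the construction in Proposition~\ref{OEBexist} uses only elements of the form $(\bga,\iota)$, so $H$ genuinely lies in the $\iota$-orbit, not merely the full orbit.

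Next, since $(G,\ell_G)$ and $(H,\ell_H)$ are in the same $Orb_{\iota}$-orbit, I would apply Theorem~\ref{canonical equivs} with $(F,\ell_F) = (G,\ell_G)$ (or equivalently $(H,\ell_H)$): the hypothesis of Item~(\ref{st one}) of that theorem is met by $(G,\ell_G)$ itself, which is canonically self-$\bga$ with every entry of order $2$ (respectively $3$). Hence Item~(\ref{st member}) holds, giving a graph in $Orb_{\iota}(G,\ell_G) = Orb_{\iota}(H,\ell_H)$ that is canonically self-twual of the right type. The only thing left is to arrange that this self-twual representative can be taken to be $H$ itself. For that I would trace through the proof of the implication (\ref{st one})$\Rightarrow$(\ref{st member}) in Theorem~\ref{canonical equivs}: there one starts from the canonically self-$\bga'$ graph $(G,\ell_G)$ and applies a suitable $(\bal,\iota)$ to reach a canonically self-twual graph. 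Applying that same $(\bal,\iota)$ to the OEB $H$ instead --- or rather, observing that applying $(\bal,\iota)$ to an OEB still yields a graph in the same $\iota$-orbit, and that Proposition~\ref{canonical OEB rep} lets us transport the canonical self-$\bga$ property of $G$ to $H$ first and then conjugate --- produces an OEB that is canonically self-$\bga'$ for the desired $\bga'$ of order $2$ (respectively $3$). Conjugation preserves order (Table~\ref{table:cong}), so the order constraint is automatic.

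Concretely, the cleanest route: by Proposition~\ref{OEBexist} pick an OEB $H$ with $(\bal_0,\iota)(G,\ell_G) = (H,\ell_H)$ for some $\bal_0 \in \fS^n$; then $G$ canonically self-$\bga$ together with Proposition~\ref{canonical prop} shows $H$ is canonically self-$\bga''$ with $\bga'' = \bal_0 \bga \bal_0^{-1}$, which by Table~\ref{table:cong} has every entry of order $2$ (respectively $3$). If this $\bga''$ already has all entries equal to $\delta$, $\tau$, or $\tau\delta\tau$ (respectively all equal to $\tau\delta$ or $\delta\tau$), we are done after one further conjugation step inside the OEB's own $\iota$-stabilizer world via Table~\ref{table:cong} --- but in general we need the (\ref{st one})$\Rightarrow$(\ref{st member}) argument of Theorem~\ref{canonical equivs} applied with the OEB as starting point, choosing $\bal$ entrywise from the conjugacy table so that $\bal\bga''\bal^{-1}$ is uniform; then $(\bal,\iota)(H,\ell_H)$ is still an OEB (an OEB is orientable with one vertex, and we must double-check this is preserved --- which it is, since canonical self-duality type is being tracked, but more carefully: the point is simply that we only claim $G$ lies in the orbit of \emph{some} OEB with the stated property, so we can name that OEB as $(\bal,\iota)(H,\ell_H)$ if it is still single-vertex orientable, or re-apply Proposition~\ref{OEBexist} otherwise and note its self-twuality type is forced by Theorem~\ref{canonical equivs}).

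The main obstacle I anticipate is exactly this last bookkeeping point: ensuring that the OEB we finally exhibit is simultaneously (a) in the orbit of $G$, (b) single-vertex and orientable, and (c) canonically self-$\bga'$ for a \emph{uniform} $\bga'$ of the correct order. Steps (a) and (c) are handled by Propositions~\ref{OEBexist} and~\ref{canonical prop} and Theorem~\ref{canonical equivs} together with the conjugacy table, but reconciling (b) with (c) requires care, since the $(\bal,\iota)$ that makes $\bga'$ uniform need not preserve the OEB property a priori. The safe resolution is to first reduce $G$ to \emph{some} OEB $H$ (Proposition~\ref{OEBexist}), record via Proposition~\ref{canonical prop} that $H$ is canonically self-$\bga''$ with $\bga''$ of the right order, and then --- crucially --- observe that for an OEB all edges are loops at a single vertex, so the freedom in choosing $\bal$ entrywise from Table~\ref{table:cong} can be exercised while staying within $\iota$-conjugations that, combined with a final twist-correction as in the proof of Proposition~\ref{OEBexist}, return us to an OEB; alternatively, and more simply, one invokes Theorem~\ref{canonical equivs} with $(F,\ell_F)=(H,\ell_H)$ to get \emph{a} canonically self-twual $(H',\ell_{H'})\in Orb_{\iota}(H,\ell_H)$, then applies Proposition~\ref{OEBexist} to $H'$ to get an OEB $H''$ in $Orb_{\iota}(H')$, and applies Theorem~\ref{canonical equivs} once more (now with $F=H''$) to conclude $H''$ is canonically self-$\bga'$ of order $2$ (respectively $3$) --- and $G\in Orb_{\iota}(H'')$. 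This iterated application closes the gap and finishes the proof.
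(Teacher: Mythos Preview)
Your ``cleanest route'' paragraph already finishes the proof and matches the paper's argument exactly: take an OEB $H$ in $Orb_\iota(G,\ell_G)$ via Proposition~\ref{OEBexist} (you correctly note that the construction there uses only elements of the form $(\bal,\iota)$, so $H$ really lies in the $\iota$-orbit), and then Proposition~\ref{canonical prop} --- or, equivalently, the implication (\ref{st member})$\Rightarrow$(\ref{st every}) of Theorem~\ref{canonical equivs} --- gives that $H$ is canonically self-$\bga'' = \bal_0\,\bga\,\bal_0^{-1}$, with every entry of order $2$ (respectively $3$) since conjugation in $\fS$ preserves order. That is the whole proof.

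Everything after that sentence is unnecessary and stems from a misreading of the statement. The phrase ``$\bga'$ of order $2$'' here means \emph{every entry of $\bga'$ has order $2$} --- this is precisely the condition in Item~(\ref{st every}) of Theorem~\ref{canonical equivs}, which the present theorem is rephrasing for OEBs --- not that $\bga'$ is uniform. You do \emph{not} need the OEB $H$ to be canonically self-twual; you only need it to be canonically self-$\bga'$ for some $\bga'$ with entries of the correct order. Consequently there is no ``main obstacle'': no need to make $\bga''$ uniform, no worry about whether further $(\bal,\iota)$-moves preserve the OEB property, and no iterated applications of Proposition~\ref{OEBexist}. Simply delete everything after ``which by Table~\ref{table:cong} has every entry of order $2$ (respectively $3$)'' and you have a complete, correct proof.
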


\begin{proof}
This follows from Theorem \ref{canonical equivs} by taking $G=F$, recalling that $Orb(G)=Orb(H)$ for any $H \in Orb(G)$, and from Proposition \ref{OEBexist}  that tells us that there is always an OEB $H$ in $Orb(G)$.
\end{proof}

The following two results are the analogs of Proposition \ref{canonical OEB rep} and Theorem \ref{canonical all} where the twuality is not necessarily canonical.  The proofs, which use Theorem \ref{natural equivs} instead of Theorem \ref{canonical equivs}, with some attention to technical details since the `conjugation' for the proof of Theorem \ref{natural all} involves a permutation,  are left to the reader.

\begin{proposition}\label{natural OEB rep} 
Suppose $H$ is an OEB that is self-$\bga$ for some non-trivial $\bga$, via a permutation $\mu$.  Then every graph in $Orb(H)$ is also self-$\bga'$ via $\mu'$, where  $\bga'$ is the `near-conjugate' $$\bga' = \bal \cdot \bga \pi^{-1} \cdot \bal^{-1} \pi \mu^{-1} \pi^{-1} $$ of $\bga$,  and $\mu'= \pi \mu \pi^{-1}$.
  
\end{proposition}

\begin{theorem}\label{natural all} Every  self-dual, -Petrial, or -Wilsonial (respectively, self-trial) graph $G$ is in the orbit of an OEB $H$ that is  self-$\bga'$  via $\mu'$ for some $\bga'$ with the property that if $C=(c_1, c_2, \ldots, c_m)$ is a cycle in the cycle decomposition of $\mu'$ , then 
$ord (\prod_{i=m}^{1} {\g_{c_i}} ) = ord(g ^m)$. 
\end{theorem}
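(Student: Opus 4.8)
The plan is to deduce Theorem~\ref{natural all} from Theorem~\ref{natural equivs} in exactly the same way that Theorem~\ref{canonical all} was deduced from Theorem~\ref{canonical equivs}. Suppose $G$ is self-dual, self-Petrial, self-Wilsonial, or self-trial; that is, $G$ (equipped with any edge order $\ell_G$) is self-$\bga$ via some permutation $\mu$, where $\bga = (g,g,\ldots,g)$ is uniform with $g$ equal to $\delta$, $\tau$, $\tau\delta\tau$, or one of $\tau\delta,\delta\tau$. Apply Theorem~\ref{natural equivs} with $F = G$ (so $\ell_F = \ell_G$), and with $g$ and $\mu$ the data witnessing the self-twuality. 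Item~\ref{wk member} of that theorem then holds with $(H,\ell_H)$ taken to be $(G,\ell_G)$ itself, since $(G,\ell_G)\in Orb(G,\ell_G)$. Hence Item~\ref{wk every} holds as well: every graph in $Orb(G,\ell_G)$ is self-$\bga'$ via some $\mu'$ satisfying the stated cycle condition, namely that for each cycle $C = (c_1,\ldots,c_m)$ of $\mu'$ one has $ord\!\left(\prod_{i=m}^{1}\g_{c_i}\right) = ord(g^m)$.

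Next I would invoke Proposition~\ref{OEBexist}: since (we may assume) $G$ is connected, there is an OEB $H$ in $Orb(G)$. (If $G$ is disconnected one argues componentwise, or simply notes that the statement is intended for connected graphs as in Proposition~\ref{OEBexist}; I would add the word ``connected'' or the componentwise remark if needed.) Choosing any edge order $\ell_H$ on $H$ compatible with the orbit relation, we have $(H,\ell_H)\in Orb(G,\ell_G)$, and therefore by Item~\ref{wk every} above, $H$ is self-$\bga'$ via some $\mu'$ with the required cycle-order property. That is exactly the conclusion of Theorem~\ref{natural all}, so the proof is complete.

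There is essentially no serious obstacle here — the theorem is a straightforward corollary of the two ingredients already proved, following the template of Theorem~\ref{canonical all} verbatim. The only point requiring a moment's care, as flagged in the paragraph preceding Proposition~\ref{natural OEB rep}, is the bookkeeping of permutations: one must be sure that the ``near-conjugation'' in Theorem~\ref{natural equivs}/Theorem~\ref{natural prop} is applied to the correct edges, i.e.\ that when we pass from $(G,\ell_G)$ to the OEB $(H,\ell_H)$ via some $(\bal,\pi)$, the witnessing permutation transforms as $\mu' = \pi\mu\pi^{-1}$ and the cycle condition is preserved under this conjugation. But conjugation permutes the cycles of $\mu$ without changing their lengths, and the products $\prod_{i=m}^{1}\g_{c_i}$ are carried to conjugate products in $\fS$ (as already observed in the proof of Theorem~\ref{natural equivs}, using that the $c_i$ range over a cycle), so $ord(g^m)$ is unaffected. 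Hence the cycle-order property is genuinely an invariant of the orbit, and the reduction to OEBs goes through.
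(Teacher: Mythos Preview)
Your proposal is correct and follows precisely the route the paper indicates: the paper leaves the proof of Theorem~\ref{natural all} to the reader, noting only that it uses Theorem~\ref{natural equivs} in place of Theorem~\ref{canonical equivs} (mirroring the derivation of Theorem~\ref{canonical all}) with some care for the permutation bookkeeping, which you have handled correctly.
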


Thus, to search for self-twual graphs, we can generate OEBs and test for self-$\bga$ properties.  If we find an OEB $H$ that is self-$\bga$  with the elements $\bga$ have the required order properties, we can then just look in the conjugacy table to generate the  self-twual graphs in its orbit. If we have exhaustively found all $\bga$ such that $H$ is  self-$\bga$, then we will be able generate all of the self-twual graphs in its orbit.  This process and the algorithm for it is discussed further in Sections \ref{Search results} and \ref{Code discussion}.

\section{New graphs from old} \label{Search results}

Here we use the results of the previous sections and a computer search to determine all graphs on up to 7 edges that are self-trial, but neither self-dual nor self petrial. We then apply Theorem \ref{automorph duals} to generate a new infinite family of self-trial graphs that are neither self-dual nor self-petrial.

We represent a graph $(G,\ell_G)$ as a collection of cyclically ordered tuples $[r_1,r_2,\ldots,r_n]$, each tuple conveying the information for attaching ribbon-ends to one of the vertices of $G$. The entries $r_i$ are integers whose absolute value $|r_i|$ is the label of a ribbon and such that $r_i<0$ exactly when ribbon $r_i$ has a twist. Such representations are referred to in Section \ref{svogs_as_chordiags} as being in ``end-label form." Further details about the computer implementation are discussed in Section \ref{Code discussion}.

\subsection {All small self-trial graphs}

Figure \ref{fig:self-svog} lists a few self-$\bga$ OEBs. There are in fact many others for the indicated numbers of ribbons, but these are the ones needed for Figure \ref{fig:selfies}. Figure \ref{fig:selfies} lists all examples, up to isomorphism, of self-trial non-self-dual (and hence also non-self-Petrial) graphs $G$ having $n$ ribbons for $3\leq n\leq 7$. In each case, these examples arose as $(G,\ell_G) = (\bal,\iota)(H,\ell_H)$ for multiple self-$(\bga,\pi)$ OEBs $(H,\ell_H)$, for various  $(\bga,\pi)$, but only one such OEB is listed. The graph $G$ itself is self-trial under the action of $(\boldsymbol{\delta}\boldsymbol{\tau},\sigma)$ for the given element $\sigma$.

\begin{figure}
    \[
    \begin{array}{cll}
         n & \mbox{the OEB} & \mbox{is invariant under}  \\\hline
         3 & H_3 = [1, 2, 3, 1, 2, 3 ] &(\, (1, \delta, \delta, ) , (1\, 2\, 3) )\, )\\[5pt]
5 & H_5 = [1, 2, 3, 4, 2, 5, 4, 1, 5, 3 ] &(\, (\tau\delta, \delta\tau, \tau\delta, 1, \delta\tau, ) , (3 \, 5\, 4 )\, )\\[5pt]
6 & H_6  = [1, 2, 3, 4, 5, 6, 2, 4, 1, 5, 3, 6 ] &(\, (\tau\delta\tau, 1, \tau\delta\tau, \tau\delta, \delta, \tau\delta\tau, ) , (1 \, 6 \,  2 )(3 \,  4\,  5 )\, )\\[5pt]
7 & H_7 = [1, 2, 3, 4, 5, 6, 2, 7, 3, 5, 1, 4, 6, 7 ] &(\, (\tau\delta\tau, \tau\delta, \delta, \delta\tau, \tau\delta, \tau\delta, \delta\tau, ) , (1 \, 6\,  3 )\, )\\
    \end{array}
    \]
    \caption{Some self-$\bga$ OEBs.  }
    \label{fig:self-svog}
\end{figure}

\begin{figure}

\[
\small{
\begin{array}{llll}
n & G &  = \bal H & \sigma\\ \hline
3 & [1, -3, 2, 1, 2, -3 ]  & = (\tau\delta\tau, \tau\delta, \delta )\,H_3 & ( 1, 2, 3 )  \\[5pt]
5 & [1, 4, 2, 3, -5, 2, 1, -5, 4, 3 ]  & = (\tau\delta\tau, \tau\delta, \tau\delta\tau, \tau\delta\tau, \delta )\,H_5 & ( 3, 5, 4 )  \\
5 & [-1, 2, 5, -3, 2, -4, -1, 5, -4, -3 ]  & = (\tau, \tau\delta, \tau\delta\tau, \tau\delta\tau, \delta )\,H_5 & ( 3, 5, 4 ) \\
5 & [-1, 4, 2, 3, -5, 2 ] [-1, -5, 4, 3 ]  & = (\delta, \tau\delta, \tau\delta\tau, \tau\delta\tau, \delta )\,H_5 & ( 3, 5, 4 )  \\
5 & [1, 4, 2, 1, 5, 4, -3 ] [2, 5, -3 ]  & = (\tau\delta\tau, \delta\tau, \tau\delta\tau, \tau\delta\tau, \delta )\,H_5 & ( 3, 5, 4 )  \\[5pt]
6 & [1, -4, -5, 3, -2 ] [1, -5, 6, 3, -4, -2, 6 ]  & = (\tau\delta, \tau, \tau\delta\tau, \tau\delta\tau, \tau\delta, \tau\delta\tau )\,H_6 & \hspace{-2em}( 1, 6, 2 )( 3, 4, 5 )  \\
6 & [-1, 4, -5, -3, -2, 4, -3, 6, -2, -1, -5, 6 ]  & = (\delta\tau, \tau\delta\tau, \tau\delta\tau, \tau\delta\tau, \tau\delta, \delta )\,H_6  & \hspace{-2em}( 1, 6, 2 )( 3, 4, 5 )  \\[5pt]
7 & [1, -5, -6, 4, -3, 7, -6, -2, 1, 4, -5, -3, -2, 7 ]  & = (\tau\delta\tau, \tau\delta\tau, \tau\delta, \tau\delta, \tau\delta\tau, \tau\delta\tau, \tau\delta )\,H_7 &  ( 1, 6, 3 )  \\
7 & [-1, -5, 6, -2, -1, -4, -5, 3, -4, 6, -7, 3, -2, -7 ]  & = (\delta\tau, \tau\delta\tau, \tau\delta\tau, \tau\delta, \tau\delta\tau, 1, \tau\delta )\,H_7 & ( 1, 6, 3 )  \\
7 & [-1, -5, 6, -7, -3, -2, -7, -1, -4, -5, -3, -4, 6, -2 ]  & = (\delta, \tau\delta\tau, \delta\tau, \tau\delta, \tau\delta\tau, \delta, \tau\delta )\,H_7 & ( 1, 6, 3 )  \\
7 & [1, -5, -6, 4, -3, 7, -6, 2, 7 ] [1, 4, -5, -3, 2 ]  & = (\tau\delta\tau, \tau, \tau\delta, \tau\delta, \tau\delta\tau, \tau\delta\tau, \tau\delta )\,H_7 & ( 1, 6, 3 )  \\
7 & [1, -5, 6, 4, -3, 2 ] [1, 4, -5, -3, 7, 6, 2, 7 ]  & = (\tau\delta, \tau, \tau, \tau\delta, \tau\delta\tau, \delta\tau, \tau\delta )\,H_7 & ( 1, 6, 3 )  \\
7 & [1, -2, 3, 4, -6, -7, 3, 5, 4, 1, 5, -6, -2, -7 ]  & = (\tau, \tau, 1, \tau\delta, \tau\delta\tau, \tau, \tau\delta )\,H_7 & ( 1, 6, 3 )  \\
7 & [-1, 5, -6, -4, 3, 7, -6, 2, 3, 5, -4, -1, 2, 7 ]  & = (\tau\delta\tau, \delta, \tau\delta, \tau\delta, \tau\delta\tau, \tau\delta\tau, \tau\delta )\,H_7 & ( 1, 6, 3)  \\
7 & [1, 5, -6, 2, 3, 7, -6, -4, 3, 5, -4, 1, 2, 7 ]  & = (\delta\tau, \delta, \tau\delta\tau, \tau\delta, \tau\delta\tau, 1, \tau\delta )\,H_7 & ( 1, 6, 3 )  \\
7 & [-1, -2, 7 ] [-1, 4, 5, 3, 7, -6, 4, 3, -2, -6, 5 ]  & = (\tau, \delta, 1, \tau\delta, \tau\delta\tau, \tau, \tau\delta )\,H_7 & ( 1, 6, 3 )  \\
7 & [-1, -5, 6, 4, -5, 3, -2, 6, -7, 3, 4, -1, -2, -7 ]  & = (\tau\delta\tau, \delta, \tau\delta, \delta\tau, \tau\delta\tau, \tau\delta\tau, \tau\delta )\,H_7 & ( 1, 6, 3 )  \\
7 & [1, 5, -6, 2, 3, 7, -6, 4, 1, 2, 7 ] [3, 5, 4 ]  & = (\delta\tau, \delta, \tau\delta\tau, 1, \tau\delta\tau, 1, \tau\delta )\,H_7 & ( 1, 6, 3 )  \\
7 & [-1, 5, 4, -1, -2, -7 ] [-2, -6, -7, 3, 4, -6, 5, 3 ]  & = (\tau\delta\tau, \delta, \tau\delta, \tau\delta, \delta, \tau\delta\tau, \tau\delta )\,H_7 & ( 1, 6, 3 )  \\
\end{array}
}
\]
\caption{Up to isomorphism, all self-trial non-self-dual graphs with $3\leq n \leq 7$. In all cases we have $G = (\boldsymbol{\delta}\boldsymbol{\tau},\sigma)G$.}\label{fig:selfies}
\end{figure}

\subsection{Examples from Theorem \ref{automorph duals}} \label{automorph examples}

We now use Theorem \ref{automorph duals} to generate an infinite family of graphs that are self-trial but neither self-dual nor self-petrial.  This family differs from the infinite family of such graphs given in \cite{JP10} which contains very large regular maps in that this family starts with quite small graphs and the members are not generally regular maps. 

Motivating the example below is the observation that twuality acts independently on the components of a one-point join of two graphs.  Thus, if $G$ is an OEB, then the one-point join of the three graphs $G$, $G^{(* \times)}$, and $G^{(\times *)}$ will automatically be self-trial.  The example below is much more general, as it is not a one-point join of graphs.

We begin with the OEB $(H_k,\ell_{H_k})$ on $3k$ untwisted edges $e_1, \ldots, e_{3k}$, where $e_i=\ell_{H_k}(i)$, which are attached according to the following [cyclic] pattern: 
\[
e_2, e_1, e_3, e_2, e_4, e_3, e_5, e_4, \ldots, e_i, e_{i-1}, e_{i+1}, e_i, \ldots, e_{3k}, e_{3k-1}, e_1, e_{3k}.
\]
Defining $\bal$ to be the ribbon group element
\[
\bal = ( \, \overbrace{{{\mathbf 1}, {\mathbf 1}, \ldots, {\mathbf 1}}}^{k \mbox{\ {\scriptsize times}}},  \overbrace{\delta\tau, \delta\tau, \ldots, \delta\tau}^{k \mbox{\ {\scriptsize times}}}, \overbrace{\tau\delta,\tau\delta, \ldots, \tau\delta}^{k \mbox{\ {\scriptsize times}}} \, ),
\]
we obtain the graph $(\bal, \iota)(H_k, \ell_{H_k})$ depicted in Figure \ref{fig:regular_to_selftrial}. Note the labeling of the edge-ends; we will make heavy use of this labeling below.
\begin{figure}[ht]
\caption{The graph $(\bal, \iota)(H_k, \ell_{H_k})$ as described in the text, depicted as an arrow diagram. For each $i$ the ends of edge $e_i$ are labeled $(i)_a$ and $(i)_b$, respectively.}
\centering
\medskip
\includegraphics[width=\textwidth]{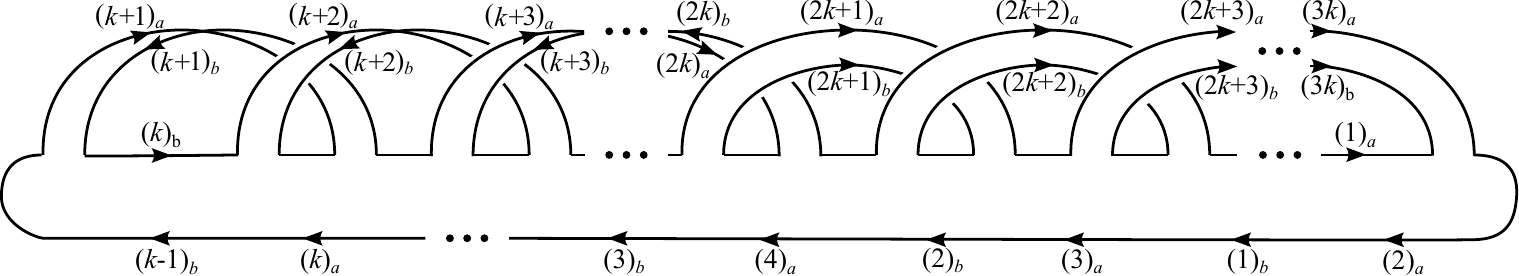}
\label{fig:regular_to_selftrial}
\end{figure}

\begin{proposition}\label{prop:infinite examples}
The graph  $(\bal, \iota)(H_k, \ell_{H_k})$ depicted in Figure \ref{fig:regular_to_selftrial} is self-trial but not self-dual.
\end{proposition}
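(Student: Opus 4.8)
The plan has two essentially independent parts: establishing self-triality via Theorem~\ref{automorph duals}, and ruling out self-duality via an isomorphism invariant of $G$. For self-triality, the first step is to produce the right order-$3$ automorphism of $H_k$. Its $6k$ edge-ends sit cyclically in the block pattern $\ldots, e_i, e_{i-1}, e_{i+1}, e_i, \ldots$, which is carried to itself by the index shift $e_i \mapsto e_{i+1}$ together with a rotation by two positions; since $H_k$ is a single-vertex \emph{orientable} ribbon graph, such a rotation of its chord diagram is an orientation-preserving ribbon-graph automorphism. Composing it with itself $k$ times gives the automorphism $\mu\colon e_i \mapsto e_{i+k}$ (indices mod $3k$), a product of $k$ disjoint $3$-cycles; hence $(\mathbf 1,\mu)(H_k,\ell_{H_k})=(H_k,\ell_{H_k})$ and $\mu$ has order $3$. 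Theorem~\ref{automorph duals} then asserts that $G=(\bal,\iota)(H_k,\ell_{H_k})$ is self-$\bga'$ via $\mu$, where $\bga'=\bal\cdot\bal^{-1}\mu^{-1}$.

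It remains to check that $\bga'$ is uniform of order $3$. Coordinatewise, $\bga'$ is the pointwise product in $\fS$ of $\bal$ with the $\mu$-permuted pointwise inverse of $\bal$; and $\mu$ cyclically permutes the three length-$k$ blocks of indices on which $\bal$ is, respectively, $\mathbf 1$, $\delta\tau$, and $\tau\delta$. Running through the (at most) three resulting cases and using the relations $\delta^2=\tau^2=(\delta\tau)^3=1$ in $\fS$ (so that $(\tau\delta)^{-1}=\delta\tau$ and $(\tau\delta)^2=\delta\tau$), each entry of $\bga'$ comes out to one and the same triality element. Thus $\bga'$ is uniform of order $3$, so by Definition~\ref{uniformdef} the graph $G$ is self-trial; in particular $\bal\neq\bal\mu^{-1}$, in agreement with $\bga'$ being nontrivial.

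For non-self-duality, recall that in the framework of Section~\ref{sec.Ribbon Group} the graph $G$ is self-dual precisely when $(\boldsymbol{\delta},\nu)(G,\ell_G)=(G,\ell_G)$ for some $\nu$, and since $\Gamma(\boldsymbol{\delta},\ell)$ is ``$\delta$ applied to every edge'' regardless of $\ell$, this is the same as $G\cong G^{*}$ as ribbon graphs. (Equivalently, if $G$ were self-dual via $\nu$ then, the stabilizer of $(G,\ell_G)$ in $\fS^{3k}\rtimes_{\phi}S_{3k}$ being a subgroup, it would contain $(\boldsymbol{\delta},\nu)(\boldsymbol{\delta}\boldsymbol{\tau},\mu)=(\boldsymbol{\tau},\nu\mu)$ — using $\phi_\nu(\boldsymbol{\delta}\boldsymbol{\tau})=\boldsymbol{\delta}\boldsymbol{\tau}$ by uniformity and $\delta\cdot\delta\tau=\tau$ in $\fS$ — so $G$ would also be self-Petrial; since conversely self-triality together with self-Petriality forces self-duality, the proposition as stated in fact places $G$ in Class~III.) So the remaining task is to exhibit an isomorphism invariant of ribbon graphs separating $G$ from $G^{*}$ — read off the arrow presentation of $G$ in Figure~\ref{fig:regular_to_selftrial}: for instance its number of vertices (equivalently, of faces), the presence of a loop, or a feature of its vertex-degree multiset that $G^{*}$ cannot reproduce.

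I expect this last point to be the main obstacle. The preceding steps are bookkeeping with Theorem~\ref{automorph duals} and the defining relations of $\fS$, but verifying $G\not\cong G^{*}$ requires actually unwinding $(\bal,\iota)$ applied to $H_k$: tracking which applications of $\delta$ split the single vertex of $H_k$, which applications of $\tau$ install twists, and how these interact edge by edge, using the explicit end-labels $(i)_a,(i)_b$ of Figure~\ref{fig:regular_to_selftrial}, and then isolating one invariant that is stable under relabeling. This is exactly the computation that the edge-end labeling in Figure~\ref{fig:regular_to_selftrial} is set up to support.
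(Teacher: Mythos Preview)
Your self-triality argument is correct and matches the paper exactly: identify the order-$3$ automorphism $\mu(i)=i+k \pmod{3k}$ of the OEB $H_k$, apply Theorem~\ref{automorph duals}, and check coordinatewise that $\bal\cdot\bal^{-1}\mu^{-1}$ is the uniform element $(\delta\tau,\ldots,\delta\tau)$. Your reduction of non-self-duality to non-self-Petriality is also correct and essentially the paper's (the paper phrases it as ``$\delta\tau$ and $\tau$ generate $\fS$'').

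The gap is that you do not actually prove $G$ is not self-Petrial; you only outline what such an argument would require. This is not a step that falls out from a single easy invariant, and it is where essentially all of the work in the paper's proof lies. The paper carries out a case analysis on $k \bmod 6$: it cuts the vertex boundary (or boundaries) of $G$ into explicit labeled strands $S_{k,b}, S_{k+1,a}, \ldots, S_{3k,b}$, determines for each residue class how these strands concatenate into one or two vertices, and then in each case exhibits an invariant that the Petrial cannot match. For $k\equiv 1,5$ it is the parity of the total number of loops; for $k\equiv 2,3$ there are two vertices and the loop counts $6m$ versus $5m+1$ (or $6m+1$ versus $5m+2$) distinguish $G$ from $G^\times$; for $k\equiv 0,4$ the argument compares the maximum length of a run of consecutive untwisted edge-ends to the maximum run of twisted ones (with $k=4$ handled separately by inspection). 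None of these invariants works uniformly across all $k$, so ``read off an invariant from Figure~\ref{fig:regular_to_selftrial}'' understates what is needed. To complete your proof you must perform this strand decomposition and supply the case-by-case invariants, or find a single invariant that works for all $k$ (the paper does not).
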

\begin{proof}
Fix $k$ and write $(G, \ell_G)$ for $(\bal, \iota)(H_k, \ell_{H_k})$ Clearly, the permutation $\mu$ given by $\mu(i) = i+k \mod 3k$ gives an automorphism of $(H_k, \ell_{H_k})$. With $\bal$ defined as above we have 
\[
\bal \cdot \bal^{-1} \mu^{-1} = ( \,  \overbrace{\delta\tau, \delta\tau, \ldots, \delta\tau}^{3k \mbox{\ \scriptsize times}} \, ).
\]
Corollary \ref{automorph duals} thus tells us that the graph $(G, \ell_G)$ is self-trial. To show that  $(G,\ell_G)$ is not self-dual it suffices to show that it is not self-Petrial, since $\delta\tau$ and $\tau$ generate $\fS$.

The graph $(G,\ell_G)$ has one or more vertices, depicted in Figure \ref{fig:regular_to_selftrial} as one or more cyclic strings of labeled arrows. We cut these at the tails of the arrows $(k)_b$, $(k+1)_a$,  $(2k)_a$, $(3k-1)_a$,$(3k)_a$ and $(3k)_b$ and at the heads of the arrows $(2k-1)_b$ and $(2k)_b$, to obtain the following linear ``strands," where we use $(i)\inv_x$ to indicate that the arrow is traversed backwards in the sequence of the strand. (Note that the tail of $(k+1)_a$ is the head of $(k+1)_b$.)
\[
\begin{array}{rcl}
S_{k+1,a} & := &  (k+1)_a \ (k+2)_b\inv \ (k+4)_a \ (k+5)_b\inv \ (k+7)_a \cdots \\ \\

S_{k+1,b} & := & (k+1)_b\inv \ (k+3)_a \ (k+4)_b\inv \ (k+6)_a \ (k+7)_b\inv \cdots \\ \\ 

S_{k,b} & := & (k)_b \ (k+2)_a \ (k+3)_b\inv \ (k+5)_a \ (k+6)_b\inv \ (k+8)_a \cdots \\ \\

S_{2k-1,b} & := & (2k-1)_b\inv \ (2k+1)_a \ (2k+3)_a \ (2k+5)_a \ (2k+7)_a \cdots \\ \\

S_{2k,a} & := & (2k)_a \ (2k+1)_b \ (2k+2)_b \ (2k+3)_b \ (2k+4)_b \cdots \\ \\

S_{2k,b} & := & (2k)_b\inv \ (2k+2)_a \ (2k+4)_a \ (2k+6)_a \ (2k+8)_a \cdots \\ \\

S_{3k,a} & := & (3k)_a \ (2)_a \ (1)_b \ (3)_a \ (2)_b \ (4)_a \cdots (k-1)_b \\ \\ 

S_{3k,b} & := & (3k)_b \ (1)_a\inv \ (3k-1)_a\inv
\end{array}
\]
By construction, these strands account for all edge ends, but the particular sequence of strands found in $(G,\ell_G)$ depends on the value of $k \mod 6$. In each case we provide the simplest argument we could find.

\mycase{$k \equiv 1 \mod 6$.} There is one vertex, given by the cyclic concatenation $$S_{3k,a} \ S_{k+1,a} \  S_{2k,a} \ S_{3k,b} \ S_{2k,b}\inv \ S_{k+1,b}\inv \ S_{k,b} \ S_{2k-1.b}.  
$$
Since there are $3k$ loops and $k$ is odd, it is not possible for $(G,\ell_G)$ to have the same number of twisted loops as untwisted loops. Thus $(G,\ell_G)$ is not self Petrial in this case.

\mycase{$k \equiv 5 \mod 6$.} There is one vertex, given by the cyclic concatenation
$$
S_{3k,a} \ S_{k+1,a} \ S_{2k,b} \ S_{3k,b}\inv \ S_{2k,a}\inv \ S_{k,b}\inv \ S_{k+1,b} \ S_{2k-1,b}.
$$
As for the case $k \equiv 1$, $(G,\ell_G)$ is not self-Petrial in this case.

\mycase{$k \equiv 2 \mod 6$.} There are two vertices, given by the cyclic concatenations $$S_{3k,a} \ S_{k+1,a} \ 
S_{2k,b}$$
and
$$S_{k+1,b} \ S_{2k-1,b} \ S_{3k,b}\inv \ S_{2k,a}\inv \ S_{k,b}\inv.$$
We have $k = 6m+2$ for some $m>0$. Let $v$ denote the vertex corresponding to the concatenation $S_{3k,a}S_{k+1,a} \ S_{2k,b}$ and let $w$ denote the other vertex. Note that the only loops on $v$, namely edges $2,3,4, \ldots, k-1$, have both their ends in $S_{3k,a}$, and these are all untwisted. This is a total of $k-2 = 6m$ loops on $v$. In all, there are 3 additional edges ($3k, 1$, and $k$) attached in $S_{3k,a}$, $2(\frac{k+1}{3})$ attached in $S_{k+1,a}$, and $\frac{k-1}{2}$ attached in $S_{2k,b}$.  Thus, the number of loops on $w$ is
\[
3k - \left[ (k-2)+3 + 2\left(\frac{k+1}{3}\right) + \frac{k-1}{2} \right] \ = \ 5m+1.
\]
Since the Petrial of $(G,\ell_G)$ has a vertex with $6m$ twisted loops whereas $(G,\ell_G)$ itself does not, we see that $(G,\ell_G)$ is not self-Petrial for any $k \equiv 2 \mod 6$.

\mycase{$k \equiv 3 \mod 6$.} There are two vertices, given by the cyclic concatenations $$S_{3k,a} \ S_{k+1,a} \ S_{2k-1,b}$$
and
$$S_{k+1,b} \ S_{2k,a} \ S_{3k,b} \ S_{2k,b}\inv S_{k,b}\inv.$$

Let $k=6m+3$ for some $m\geq 0$, let $v$ denote the verex corresponding to the concatenation $S_{3k,a} \ S_{k+1,a} \ S_{2k-1,b}$ and let $w$ denote the other vertex. As in the case $k \equiv 2$, the vertex $v$ has $k-2$ loops, all untwisted; in the case $k\equiv 3$ this is $6m+1$ untwisted loops. Also attached to $v$ are additional edges as follows: $(k-2)+3$ attached to $S_{3k,a}$, $2\frac{k}{3}-1$ attached to $S_{k+1,a}$, and $\frac{k+1}{2}$ attached to $S_{2k-1,b}$. The number of loops on $w$ is then 
\[
3k - \left[ (k-1)+3 + 2\frac{k}{3}-1 + \frac{k+1}{2} \right] \ = \ 5m+2.
\]
As in the case $k\equiv 2$, we see that $(G,\ell_G)$ is not self-Petrial for any $k \equiv 3 \mod 6$.

\mycase{$k \equiv 0 \mod 6$.} There is one vertex, given by the cyclic concatenation $$S_{3k,a} \ S_{k+1,a} \  S_{2k-1,b} \ S_{3k,b}\inv \ S_{2k,a}\inv \ S_{k+1,b}\inv \ S_{k,b} \ S_{2k,b}.$$

We will show that, along the single vertex, there is a sequence  of length $2k-2$ of consecutive untwisted edge-ends, whereas the longest sequence of consecutive twisted edge ends is at most of length $\frac{4}{3}k-1$. This discrepancy guarantees that $(G,\ell_G)$ is not self-Petrial. 

To verify the claim, we analyze as follows. First, note that in strand $S_{3k,a}$, in the sequence from $(2)_a$ to $(k-1)_b$, we have both ends of the $k-2$ untwisted loops $2, 3, 4, \ldots, k-1$ as well as $(1)_b$ and $(k)_a$. The other end of loop $1$ is in $S_{3k,b}\inv$, and thus $(1)_b$ is an untwisted edge end, and the other end of loop $k$ is at the beginning of $S_{k,b}$, and thus $(k)_a$ is also an  untwisted edge end. In all, then, we have in $S_{3k,a}$ a total of $2(k-2)+2 = 2k-2$ consecutive untwisted edge ends. 

We now bound the length of any sequence of consecutive twisted edge ends. From the discussion above we know that edges $1, 2,$ and $k-1$ are untwisted. It is also true that edge $2k$ is  untwisted since end $(2k)_a\inv$ in $S_{2k,a}\inv$ is paired with $(2k)_b\inv$ in $S_{2k,b}$. We use the ends of these edges to break up the cyclic sequence of all edge-ends into five subsequences we denote I, II, III, IV, V as follows:
\[
(1)_a \ \mbox{I} \ (2k)_a\inv \ \mbox{II} \ (2k)_b\inv \ \mbox{III} \ (2)_a \ \mbox{IV} \ (k-1)_a \ \mbox{V} \ (1)_a.
\]
Of course, any sequence of consecutive twisted edge ends must lie entirely within one of the numbered subsequences other than IV, which has only untwisted edge ends. We can thus complete the proof by determining the lengths of I, II, III, and V, respectively, and observing that all are less than $2k-2$.

\begin{itemize}
    \item[I:] The end $(3k)_b\inv$ in $S_{3k,b}\inv$ together with the $k-1$ edge-ends of $S_{2k,a}\inv$ other than $(2k)_a\inv$ yield $k$ edge ends in I. \medskip
    
    \item[II:] The $\frac{2}{3}k -1$ edge-ends from $S_{k+1,b}\inv$ together with the $\frac{2}{3}k$ edge-ends from $S_{k,b}$ give $\frac{4}{3}k -1$ edge-ends in II.  \medskip
    
    \item[III:] There are a total of $\frac{1}{2}k$ edge-ends in III, namely, $\frac{1}{2}k-1$ edge-ends other than $(2k)_b\inv$ from $S_{2k,b}$, together with $(3k)_a$ in $S_{3k,a}$. \medskip
    
    \item[V:] In V there are a total of $\frac{2}{3}k + \frac{1}{2}k$ edge-ends. These are the $\frac{2}{3}k-1$ edge-ends in $S_{k+1,a}$, the $\frac{1}{2}k$ edge-ends in $S_{2k-1,b}$, and the edge-end $(3k-1)_a$ in $S_{3k,b}\inv$.
\end{itemize}
This completes the case when $k\equiv 0 \mod 6$.

\mycase{$k \equiv 4 \mod 6.$} There is one vertex, given by the cyclic concatenation $$S_{3k,a} \ S_{k+1,a} \ S_{2k,a} \ S_{3k,b} \ S_{2k-1,b}\inv S_{k,b}\inv \ S_{k+1,b} \ S_{2k,b}.$$

In the case that $k=4$, the single vertex is given by the cyclic sequence 
\[
\begin{array}{cccccccccccc}
(12)_a & (2)_a & (1)_b &  (3)_a &  (2)_b &  (4)_a &  (3)_b &  (5)_a &  (6)_b\inv &  (8)_a &  
(9)_b &  (10)_b\\
+  & +  & - &  + &  + &  - &  + &  - &  + &  - &  - &  + \\ \\
 
 (11)_b & (12)_b &  (1)_a\inv &  (11)_a\inv &  (9)_a\inv &  (7)_b &  (6)_a\inv &  (4)_b\inv &  (5)_b\inv &  (7)_a &  (8)_b\inv &  (10)_a\\
-  &  +  &  -  &  -  &  -  &  +  &  +  &  -  &  -  &  +  &  -  &  +
\end{array}
\]
Here, we have indicated under each edge-end whether it belongs to an untwisted edge ($+$) or a twisted edge ($-$). Note that there is exactly one all-``$+$" consecutive subsequence of length three (on ends $(10)_a (12)_a (2)_a$), and exactly one all-``$-$" subsequence of length three (on ends $(1)_a\inv (11)_a)\inv (9)_a\inv$). Thus, any isomorphism of $(G,\ell_G)$ with its Petrial necessarily maps each of these length three subsequences to the other. Considering the surrounding edge-ends, however, even allowing for reversing the orientation, we see this is not possible. The subsequence $+-+\framebox{$- - -$}++-$ needs to map to $-+-\framebox{$+++$}--+$, whereas the subsequence actually present is $-+-\framebox{$+++$}-++$. 

We now address the cases where $k\geq 10$ by analyzing the lengths of sequences of consecutive twisted, or consecutive untwisted, edge ends. 
\begin{itemize}
    \item From $(3)_a$ to $(k-2)_a$ in $S_{3k,a}$ is a length $2k-6$ sequence of consecutive untwisted edges ends. 
    \item From $(k)_a$ in $S_{3k,a}$ to $(2k)_a$ in $S_{2k,a}$ is a sequence of edge ends alternating between twisted and untwisted which both starts and ends with twisted edges.
    \item From $(2k+1)_b$ in $S_{2,a}$ to $(3k)_b$ in $S_{3k,b}$  is a sequence of edge ends alternating between twisted and untwisted which starts with a twisted-edge end and ends with an untwisted-edge end.
    \item From $(1)_a\inv$ in $S_{3k,b}$ to $(2k+1)_a\inv$ in $S_{2k-1,b}\inv$ is a length $\frac{1}{2}k+2$ sequence of twisted-edge ends.
    \item From $(2k-1)_b$ in $S_{2k-1,b}\inv$ to $(k+2)_a\inv$ is a length $\frac{2}{3}(k-1)$  sequence of untwisted-edge ends.
    \item $(k)_b\inv $ in $S_{k,b}\inv$ is twisted, and then from $(k+1)_b\inv$ in $S_{k+1,b}$ to $(2k)_b\inv$ in $S_{2k,b}$ is a sequence of edge ends alternating between twisted and untwisted which starts and ends with a twisted-edge end.
    \item from $(2k+2)_a$ in $S_{2k,b}$ to $(3k)_a$ in $S_{3k,a}$ is a length $\frac{1}{2}k$ sequence of untwisted-edge ends. 
\end{itemize}
Since there is a length $2k-6$ sequence of consecutive untwisted edges ends but there is no sequence of consecutive twisted edge ends of that length, we see that $(G,\ell_G)$ cannot be isomorphic to its Petrial.
\end{proof}

\section{Code discussion} \label{Code discussion}

In this section we describe some of the more interesting points involved in developing a computational implementation to make use of the theoretical framework discussed above.

\subsection{Labeled ribbon graphs}

Let $(G,\ell)$ be a ribbon graph. In the implementation, where we do not have topological objects themselves but their symbolic encodings, we treat the elements of $\{1,2, \ldots,n\}$ as the names of the edges and take $\ell$ in all cases to be the identity function.

\subsection{Single-vertex ribbon graphs as chord diagrams} \label{svogs_as_chordiags}

An OEB can be represented as a signed chord diagram, with the outer circle representing the vertex, the chords representing the ribbons, and a chord being signed positive or negative according as the corresponding ribbon is untwisted or twisted. For implementation purposes, we linearize the chord diagram by making an arbitrary choice of a designated point on the circle. Thus, to work with chord diagrams as representations of OEBs, we need to consider orbits under a dihedral action; this is discussed more below.

We use a $2k$-tuple $D$ to represent a chord diagram with $k$ chords in three different ways, as given below. In all cases, a negative entry indicates the corresponding edge is twisted, and a positive entry indicates it is not twisted.

\begin{itemize}
\item If $D$ is in \emph{offset form} then  $|D(i)|=j$ indicates that the ribbon with one end at spot $i$ has its other end at spot $i+j \mbox{\ mod\ } 2k$. This representation has the benefit that a cyclic shift does not change the values, so it is useful when considering equivalence modulo a dihedral action and, more generally, isomorphism of unlabeled ribbon graphs.

\item If $D$ is in \emph{end-spot form} then $|D(i)|=j$ indicates that the ribbon with one end at spot $i$ has its other end at spot $j$. This representation has the benefit that it is easy to work with in terms of splicing and concatenating.

\item If $D$ is in \emph{end-label form} then $|D(i)|=j$ indicates that the ribbon with label $j$ has one end at spot $i$. This representation works best with the $(G,\ell)$ notation.
\end{itemize}

Conversion between forms is a quick linear process.

\subsection{Isomorphism of ribbon graphs using chord diagrams}

As noted above, equivalence of single-vertex labeled ribbon graphs represented as linearized chord diagrams is determined by dihedral action. Let $(G,\ell)$ and $(G',\ell')$ be OEB's, and let $\hat D, \hat D'$ be the chord diagrams in end-label form corresponding to  $(G,\ell)$ and $(G',\ell')$, respectively. If $\phi\colon (G,\ell) \to (G',\ell')$ is an isomorphism, then there is a corresponding dihedral permutation $\sigma \in \D_{2k}$ such that the  bijection on edges corresponding to $\phi$ is $\ell'(\hat D '(\sigma(i))) \mapsto \ell(\hat D(i))$. Since we take $\ell$ and $\ell'$ to be the identity map and $\phi$ preserves labels, we have $\hat D'(\sigma(i)) = \hat D(i)$ for $i = 1, 2, \ldots, 2k$. Of course, this framework accounts for the case when $\hat D$ and $\hat D'$ are two different chord diagrams in end-label form for the same OEB $(G,\ell)$.

Given $(G,\ell)$ and $(G',\ell')$ and an isomorphism $\phi\colon G \to G'$ of unlabeled graphs, there is a permutation $\pi\in S_k$ such that $\phi$ gives an isomorphism $(G,\ell) \to (G',\ell'\pi^{-1})$.

The necessity of the permutation $\pi$ when working with the end-label form of linearized chord diagrams slows computation down considerably. Offset form is preferable for this purpose, but working with the dihedral action in this case requires slightly more care. Let $\D_{2k} = \langle \rho, \tau \rangle$ where $\rho$ represents the order-$k$ cyclic shift and $\tau$ represents the order-2 flip. For any size-$2k$ linearized chord diagram $D$ in offset form we write $\underline{D}$ to denote the size-$2k$ linearized chord diagram given by $\underline{D}(i) = 2k - D(i)$. We define a right-action of $\D_{2k}$ as follows: If $\sigma \in \langle \rho \rangle < \D_{2k}$ then $D\cdot\sigma = D\circ \sigma$, whereas if $\sigma \in \tau\langle \rho \rangle$ then $D\cdot\sigma = \underline{D}\circ\sigma$. Since $\langle \rho \rangle$ is an index-2 (hence normal) subgroup and $\underline{\underline{D}}=D$, this action is well defined.

With the dihedral action set up this way, isomorphism of unlabeled graphs becomes computationally much easier. If $D$ and $D'$ are the linearized chord diagrams in offset form corresponding to $(G,\ell)$ and $(G',\ell')$,  respectively, then $G$ is isomorphic to $G'$ as an unlabeled graph if and only if there is $\sigma \in \D_{2k}$ such that $ D' \cdot \sigma = D$. Given such a $\sigma$, we can recover the corresponding permutation $\pi$ as the unique permutation satisfying $\hat D'(\sigma(i)) = \pi\hat D(i)$ for all $i= 1,2, \ldots, 2k$, where $\hat D', \hat D$ are the linearized chord diagrams in end-label form corresponding to  $(G,\ell), (G',\ell')$,  respectively.

\subsection{Enumerating chord diagrams}

To enumerate chord diagrams up to isomorphism, we first used the algorithm of Nijenhuis and Wilf \cite{NW} to enumerate linear diagrams. The basic approach of the algorithm is to build up larger diagrams from smaller ones; accordingly, the end-spot representation proved to be most useful for these computations. In order to obtain the desired list of representatives of isomorphism classes of [cyclic] chord diagrams, after reinterpreting the linear diagrams as cyclic they were put into a canonical linear representation modulo the dihedral action, and then duplicates were removed from the resulting list.

\subsection{Ribbon operations}

To perform ribbon operations, we encode ribbon graphs using {\it jewels}, which are properly 4-colored 4-regular simple graph with colors red, green, blue, and yellow, say, in which the red-green-blue subgraph is a disjoint union of 4-cliques. A jewel encodes an embedding of a graph $G$ as follows:
\begin{itemize}
    \item[] vertices of $G$ = components of red-yellow subgraph
    \item[] edges of $G$ = components of red-blue subgraph
    \item[] faces of $G$ = components of yellow-blue subgraph
\end{itemize}
Jewels are a generalization of {\it gems}, which incorporate only red, blue, and yellow edges. Gems and jewels were used by Lins \cite{Lins} to reframe the work of Wilson \cite{Wil79} on which this paper is based. Given a jewel $\Gamma$, let $G = G(\Gamma)$ denote the corresponding graph. For an edge $e$ of 
$G(\Gamma)$, let $K_e$ denote the red-green-blue 4-clique corresponding to $e$. Ribbon graph operations may now be realized by the following actions:
\begin{itemize}
    \item[] dual of an edge $e$: switch the red and blue colorings in $K_e$
    \item[] Petrial of an edge $e$: switch the blue and green colorings in $K_e$
\end{itemize}

A jewel which encodes a single vertex graph has a single red-yellow jewel-cycle $v$. To convert this to a linearized chord diagram requires choosing a directed red jewel-edge to be the initial jewel-edge and ordering the other red jewel-edges sequentially around $v$. 

\subsection{Finding stabilizers}
Given labeled ribbon graph $(G,\ell)$ with $k$ ribbons, we want all pairs $(\bga,\pi)$ such that $(\bga,\pi)(G,\ell) \cong (G,\ell)$. Let $D, \hat D$ be linearized chord diagrams for $(G,\ell)$ in offset form and label form, respectively. For each $\bga \in \fS^n$, apply $\bga$ to $(G,\ell)$ and determine the corresponding linearized chord diagrams $D_{\bga}, \hat{D}_{\bga}$ in offset and label forms, respectively. For each $\sigma \in \D_{2k}$ such that $D_{\bga\sigma} \equiv D$, define $\pi\in S_k$ by $\pi^{-1}\hat{D}_{\bga\sigma} \equiv \hat D$, and return $(\bga\pi^{-1},\pi) = (1,\pi)(\bga,\iota)$. 

\subsection{Finding self-twual graphs}

Suppose we are given $n$-ribbon $(H,\ell_H)$, ribbon-group element $\bga$ and permutation $\mu \in S_n$ such that $(\bga,\mu)(H,\ell_H) \cong (H,\ell_H)$, and $\bga' \in \fS^n$. We want a graph $(G,\ell_G)$ of the form $(G,\ell_G) = (\bal,\iota)(H,\ell_H)$, for some $\bal\in\fS^n$, which is self-$\bga'$ by $\mu$. By Theorem \ref{natural prop},  $\bal$ must satisfy $\bal\bga\phi_\mu(\bal^{-1}) = \bga'$. Since $\phi_\mu$ is a homomorphism, this gives $\phi_\mu(\bal) = (\bga')^{-1}\bal\bga$ which, in coordinates, says
\begin{equation}\label{eq:alpharelation}
(\alpha_{\mu^{-1}(1)}, \alpha_{\mu^{-1}(2)}, \ldots, \alpha_{\mu^{-1}(n)}) \ = \
\left((\gamma'_1)^{-1}\alpha_1\gamma_1, (\gamma'_2)^{-1}\alpha_2\gamma_2, \ldots, (\gamma'_n)^{-1}\alpha_n\gamma_n \right)
\end{equation}

We use the following algorithm to find all possible $\bal$: Consider each cycle $$C = \left(i \; \mu^{-1}(i) \; \mu^{-2}(i) \; \ldots \; \mu^{-r}(i)\right)$$ in the cycle decomposition of $\mu$, and for each $C$ consider each element $a \in \fS$. Assign $a$ to $\alpha_i$ and use Equation (\ref{eq:alpharelation}) to iteratively determine $\alpha_{\mu^{-1}(i)}, \alpha_{\mu^{-2}(i)}, \ldots, \alpha_{\mu^{-r}(i)}.$ If the determined value of $\alpha_{\mu^{-r}(i)}$ agrees with $a$ then we record the computed data as an option for the portion of $\bal$ corresponding to cycle $C$. If each cycle of $\mu$ had at least one computation recorded, assemble all possible $\bal$ by choosing, in all possible ways, one of the available options for each portion of $\bal$.

\section{Further directions}\label{sec:further directions}

The work outlined here would be greatly facilitated by a systematic way to study OEBs.  In particular, a canonical choice of OEB representative for each orbit would streamline not only computer searches, but the theory of twuality in general.

Also, while we found many examples of Class III graphs, none of the examples given here are canonically self-trial. This raises the question of whether or not canonically self-trial graphs exist.

Moreover, the results of Section \ref{propagation} have broader implications than just for the OEBs we used in the examples given here.  In particular, the results apply to any $H$, not just an OEB.  Thus, it is possible to take any self-twual graph $H$ and use conjugation to generate others. Numerous examples of various kinds of self-twual graphs are known.  It is already known and clear that conjugating by the same group element throughout will yield another self-twual graph.  However, the conjugacy table shows that at each edge there is a choice of several elements that can be applied.  This opens the potential for many more self-twual graphs arising from any known example.  It is then a matter of checking whether the result is isomorphic to the known example, which is easy in the canonical case. Given the wealth of existing information in the literature about regular maps and their automorphism groups, Theorem \ref{automorph duals} should lead to a rich source of new graphs with desirable self-twuality properties of all kinds.

Another natural direction following from this research is generalizing it to delta-matroids, which are to embedded graphs as matroids are to abstract graphs.  There are recently developed frameworks extending twuality operations of partial duality and partial petrie duals to delta-matroids, and the ribbon group action lifted to this setting would open investigation into various forms of self-twuality for delta-matroids. See \cite{CMNR19a, CMNR19b}.

\bibliographystyle{amsplain}

\end{document}